\def\bt{\begin{thm}}
\def\et{\end{thm}}
\def\bl{\begin{lem}}
\def\el{\end{lem}}
\def\bd{\begin{defi}}
\def\ed{\end{defi}}
\def\bc{\begin{cor}}
\def\ec{\end{cor}}
\def\bp{\begin{proof}}
\def\ep{\end{proof}}
\def\br{\begin{rem}}
\def\er{\end{rem}}
\newtheorem{thm}{Theorem}[section]
\newtheorem{prop}[thm]{Proposition}
\newtheorem{lem}[thm]{Lemma}
\newtheorem{defn}[thm]{Definition}
\newtheorem{rem}[thm]{Remark}
\newtheorem{cor}[thm]{Corollary}
\numberwithin{equation}{section}
\newcommand{\pk}{\Bbb{P}^k}
\newcommand{\pN}{\Bbb{P}^N}
\newcommand{\M}{\mathcal{M}}
\newcommand{\Md}{\mathcal{M}_d}
\newcommand{\Hd}{\mathcal{H}_d}
\newcommand{\la}{\langle}
\newcommand{\ra}{\rangle}
\newcommand{\om}{\omega_{FS}}
\newcommand{\f}{\textbf{f}}
\newcommand{\p}{\mu_{\textbf{f}}}
\newcommand{\E}{\Bbb{E}}
\newcommand{\B}{\mathscr{B}}
\title[ASIP for non-autonomous holomorphic dynamics in $\pk$]{Almost Sure Invariance Principle for\\ non-autonomous holomorphic dynamics in $\pk$}
\author{Turgay Bayraktar}
\date{\today}
\address{Faculty of Engineering and Natural Sciences, Sabanc{\i} University, \.{I}stanbul, Turkey}
\email{tbayraktar@sabanciuniv.edu}
\keywords{Almost Sure Invariance Principle, Central Limit Theorem, Holomorphic maps, Non-autonomous dynamical systems}
\subjclass[2000]{37F10, 60F17, 32H50}
\begin{document}

\begin{abstract}
We prove almost sure invariance principle, a strong form of approximation by Brownian motion, for non-autonomous holomorphic dynamical systems on complex projective space $\pk$ for H\"{o}lder continuous and DSH observables.
\end{abstract}
\maketitle
\section{Introduction}
Let $f:\pk \to \pk$ be a holomorphic map of algebraic degree $d\geq2$ and $\omega_{FS}$ denote the Fubini-Study form on $\pk$ normalized by $\int\omega_{FS}^k=1.$ Dynamical \textit{Green current} $T_f$ of $f$ is defined to be the weak limit of the sequence of smooth forms $\{d^{-n}(f^n)^*\omega_{FS}\}$ (\cite{Br,HP,FS2}). Green currents play an important role in the dynamical study of holomorphic endomorphisms of the projective space \cite{FS2,Si}. The current $T_f$ has H\"{o}lder continuous quasi-potentials, hence by Bedford-Taylor theory the exterior products $$T_f^p=T_f\wedge\dots \wedge T_f$$ are also well-defined for $1\leq p\leq k$ and dynamically interesting currents. In particular, the top degree intersection $\mu_f=T_f^k$ yields the unique $f$-invariant measure of maximal entropy (\cite{Lu,BrDu2}) with many interesting stochastic properties. For instance, in \cite{Dup} Dupont obtained an almost sure invariance principle (ASIP) for the holomorphic dynamical system $(\pk,\B,f,\mu_f)$  by using coding techniques and applying Philipp-Stout's theorem \cite{PS75} for observables with analytic singularities. The coding techniques were originally introduced by Przytycki-Urba\'nski-Zdunik \cite{PUZ} in complex dimension one from which they deduced ASIP (see also \cite{Haydn,PRL} and the references therein for some statistical results in
the case of dimension one). Recall that ASIP in the context of a holomorphic dynamical system indicates that for suitable class of observables $\psi,$ the partial Birkhoff sums $\frac1n\sum_{j=0}^{n-1}\psi\circ f^n$ can be approximated by a Brownian motion (at integer times) in such a way that almost surely the error between the trajectories is negligible relative to the size of the trajectories (see \S\ref{asip} for details). Some important statistical laws such as Central Limit Theorem (CLT) and Law of Iterated Logarithm (LIL) are among the immediate consequences of ASIP. In this context, CLT was also obtained by \cite{ClB,DS06, DNS}  by means of different methods. The approach of Dinh-Nguyen-Sibony \cite{DNS} is based on Gordin's martingale approximation method \cite{Gordin} and exponential decay of correlations for the H\"{o}lder continuous and DSH observables. Recall that a DSH function can be locally written as a difference of two plurisubharmonic (psh) functions.

\par In \cite{Bay}, we studied ergodic and statistical properties of random dynamical systems of holomorphic endomorphisms of $\pk.$ The latter is defined as successive iterations of holomorphic maps which are chosen at random according to a fixed probability measure. Under a mild assumption on the probability law, there is a naturally associated ergodic skew-product and a stationary homogenous Markov chain in that dynamical setting for which we established two versions of CLT. The first one is for the partial Birkhoff sums of the skew product for H\"{o}lder continuous and DSH observables. The second one is for the backward images of randomly chosen points with respect to the stationary probability law of the Markov chain. 

\par In the present paper, we consider non-autonomous holomorphic dynamical systems of the complex projective space $\pk.$ The latter is defined as successive iterations of a sequence of non-linear holomorphic maps of the same algebraic degree. In this setting,  we obtain more refined statistical results. Namely, we prove an ASIP (Theorem \ref{th1}) for these non-stationary systems under a natural assumptions (\ref{A}) and (\ref{B}) on the distance of the tail of the sequence from the complement of holomorphic maps. It should be emphasized that the martingale approximation method of Gordin \cite{Gordin} gives rise to a reverse martingale increment sequence. Then using the martingale CLT, one can deduce CLT for the observations of a stationary dynamical system. The ASIP can often also be obtained
in this way for the class of systems which are closed under time-reversal (see \cite{MeNi} and references therein). However, for classes of non-stationary systems that are intrinsically time-orientated, such as non-autonomous holomorphic dynamics considered in in this paper, the situation can be more delicate (see eg. \cite{MeNi,CuMe,HNTV,KKM} for examples of real dynamical systems). Here, we utilize exponential decay of correlations for DSH and H\"{o}lder continuous observables and the abstract invariance principle obtained by \cite{CuMe} in order to prove ASIP in the present setting. 

\subsection{Main Result} Recall that the set of holomorphic endomorphisms $\Hd$ of fixed algebraic degree $d$ is a Zariski open subset of the set of meromorphic maps $\Md$ and the complement $\M:=\Md\setminus\Hd$ is an irreducible hypersurface \cite{GKZ}. The set $\Md$ can be identified with $\pN$ where $N=(k+1) {{d+k}\choose{d}} -1.$ In what follows, we let $dist(\cdot,\cdot)$ denote the distance induced by the Fubini-Study metric which is normalized here such that $dist(\cdot,\cdot)\leq 1.$

\par For a sequence $\f:=(f_0,f_1,\dots)\ \text{of holomorphic maps} \ f_n:\pk\to \pk$ of fixed algebraic degree $d\geq2$ we study statistical properties of the iterates
$$F_{n}:=f_{n-1}\circ \dots \circ f_{1} \circ f_{0}:\pk\to \pk.$$  In \cite{Bay} (see also \cite{dTh1}), for such a sequence $\f=(f_j)\subset \Hd$ whose ``tail" is sufficiently far from the complement $\mathcal{M},$ we proved the existence of a measure $\mu_{\f}$ which describes asymptotic distribution of pre-images of a generic point $z\in \pk.$ 

  \begin{thm}\label{basic}\cite{Bay}
  Let $\f=(f_j)_{j\geq0}\subset \Hd$ be a sequence of holomorphic maps verifying
\begin{equation}\label{A}
\liminf_{n\to \infty}\frac1n\sum_{j=0}^{n-1}\log dist(f_j,\mathcal{M})>-\infty
\end{equation} 
and

\begin{equation}\label{B}
\liminf_{n\to \infty}\frac1j\log dist(f_j,\mathcal{M})=0.
\end{equation}   
Then there exists a probability measure $\mu_{\f}$ such that for every smooth probability  measure $\nu$ on $\pk$ 
$$\frac{1}{d^{nk}}F_{n}^*\nu \to \mu_{\f}$$ in the sense of measures as $n\to \infty.$ Moreover, the measure $\mu_{\f}$ has H\"{o}lder continuous potentials. 
  \end{thm}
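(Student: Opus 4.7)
My plan follows the Dinh--Sibony style construction of Green currents, adapted to the non-autonomous setting and tailored to make use of the two quantitative hypotheses (\ref{A}) and (\ref{B}). For each $f_j \in \Hd$, I would first write $d^{-1} f_j^* \om = \om + dd^c u_j$ with a canonically normalized quasi-potential $u_j$ (e.g.\ $\int u_j\,\om^k=0$). The key preliminary estimate — which is essentially classical and is the reason (\ref{A}) and (\ref{B}) enter in the form stated — is a bound of the shape
\begin{equation*}
\|u_j\|_{\mathcal{C}^\alpha}\ \le\ C\bigl(1-\log dist(f_j,\M)\bigr),
\end{equation*}
for some $\alpha\in(0,1]$ independent of $j$, with an analogous bound on $\|u_j\|_\infty$. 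This is where $\M$ being an irreducible hypersurface in $\Md\cong\pN$ together with the Lojasiewicz inequality enters: degeneration of $f_j$ towards $\M$ controls the blow-up of its quasi-potentials in a precise way.

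Next I would construct the Green current of the sequence $\f$ as the limit of $T_n := d^{-n} F_n^* \om$. Writing the telescoping identity
\begin{equation*}
T_n - \om\ =\ \sum_{j=0}^{n-1} d^{-(j+1)}\, dd^c\bigl(u_j\circ F_j\bigr),
\end{equation*}
the problem reduces to showing the series $U_{\f}:=\sum_{j\ge 0} d^{-(j+1)} u_j\circ F_j$ converges, and does so in a space of Hölder functions. For the $L^\infty$ (hence weak) convergence, hypothesis (\ref{A}) is exactly what is needed: the Cesàro bound on $\log dist(f_j,\M)$ combined with the $\|u_j\|_\infty$ estimate gives a uniform summable tail, because the geometric factor $d^{-(j+1)}$ dominates any sub-exponential growth of $u_j$. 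This yields existence of $T_{\f}:=\lim T_n = \om + dd^c U_{\f}$ as a positive closed $(1,1)$-current.

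The delicate point — and, in my view, the main obstacle — is Hölder continuity of $U_{\f}$, since the pullback by $F_j$ can be very expansive and the Hölder constants of $u_j\circ F_j$ can grow very quickly. Here I would use hypothesis (\ref{B}): since $\log dist(f_j,\M)=o(j)$, the Hölder constants of $u_j$ grow at most sub-exponentially in $j$, while the Lipschitz constants of $F_j$ grow at most like $D^j$ for some fixed $D$. Interpolating between the $L^\infty$ bound and the Lipschitz bound via the standard trick $\|g\|_{\mathcal{C}^{\alpha'}}\le \|g\|_\infty^{1-\alpha'/1}\|g\|_{\mathrm{Lip}}^{\alpha'}$ (applied to each term $u_j\circ F_j$, then summed geometrically in $j$ after optimizing the exponent), one obtains convergence of the series in some Hölder class $\mathcal{C}^{\alpha'}$ with $\alpha'>0$. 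This is the argument we already used in \cite{Bay} and is analogous to Sibony's original argument for a single endomorphism.

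Once $T_{\f}$ has Hölder-continuous potentials, Bedford--Taylor theory lets me define
\begin{equation*}
\mu_{\f}\ :=\ T_{\f}^k\ =\ T_{\f}\wedge\cdots\wedge T_{\f},
\end{equation*}
which automatically has Hölder continuous local potentials (the product of currents with Hölder potentials inherits Hölder potentials with a smaller exponent). It remains to upgrade $d^{-nk} F_n^* \om^k \to \mu_{\f}$ to $d^{-nk} F_n^* \nu \to \mu_{\f}$ for every smooth probability measure $\nu$. I would write $\nu - \om^k = dd^c S$ for a smooth form $S$ of bidegree $(k-1,k-1)$, so that
\begin{equation*}
d^{-nk} F_n^*\nu - d^{-nk}F_n^*\om^k\ =\ d^{-nk}\, dd^c F_n^* S.
\end{equation*}
Multiplying out $F_n^* S$ and using $d^{-n} F_n^*\om\to T_{\f}$ together with the $\mathcal{C}^0$-estimates on the quasi-potentials from step two, each wedge component is controlled, and the error tends to zero in the sense of currents. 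Standard regularity of plurisubharmonic potentials then promotes this to weak convergence of measures, completing the proof.
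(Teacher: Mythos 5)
Your construction follows essentially the same route as the paper's \S\ref{EME} (the statement itself is quoted from \cite{Bay}, but Theorem \ref{thm2:1} and the proposition after it reprove the relevant parts): telescoping $d^{-n}F_n^*\om=\om+dd^c\sum_{j=0}^{n-1}d^{-j}u_j\circ F_j$, uniform convergence of the potentials, H\"older continuity by trading the size of $u_j$ against the at-most-exponential expansion of $F_j$ (which (\ref{A}) guarantees), then $\mu_{\f}:=T_{\f}^k$ via Bedford--Taylor, and the exact-term estimate for a general smooth $\nu=\om^k+dd^cS$. That architecture is correct. The genuine problem is your ``key preliminary estimate'': for $\alpha>0$ the bound $\|u_j\|_{\mathcal{C}^{\alpha}}\leq C\bigl(1-\log dist(f_j,\M)\bigr)$ is false. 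Only the sup-norm of the normalized quasi-potential is logarithmic in $dist(f_j,\M)^{-1}$ (via \L{}ojasiewicz applied to the resultant); every H\"older or $\mathcal{C}^1$ norm blows up like a \emph{power} $dist(f_j,\M)^{-q}$, which is exactly the estimate the paper imports from \cite[Proposition 3]{dTh1}, see (\ref{c1}). Already for $k=1$ and $f_{\epsilon}=[z_0^2:z_0z_1+\epsilon z_1^2]$, the potential $u_{\epsilon}$ satisfies $\|u_{\epsilon}\|_{\infty}\asymp|\log\epsilon|$ but oscillates by $\asymp|\log\epsilon|$ over a scale $\epsilon$ near $[0:1]$, so its $\alpha$-H\"older seminorm is at least of order $\epsilon^{-\alpha}|\log\epsilon|$, while $-\log dist(f_{\epsilon},\M)\asymp|\log\epsilon|$.

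This is not a cosmetic slip, because it is precisely where hypothesis (\ref{B}) enters: with your logarithmic bound, (\ref{A}) alone would give $\|u_j\|_{\mathcal{C}^{\alpha}}=O(j)$ and (\ref{B}) would be superfluous, which should have been a warning sign. The repair is to run your own interpolation with the correct input: by (\ref{B}), $dist(f_j,\M)\geq e^{-\epsilon j}$ for large $j$, so (\ref{c1}) gives $\|u_j\|_{\mathcal{C}^1}\lesssim e^{q\epsilon j}$, while (\ref{A}) gives $\mathrm{Lip}(F_j)\leq D^j$ for some fixed $D$ (this is the paper's finiteness of $\chi_{top}(\f)$); then $\|d^{-j}u_j\circ F_j\|_{\mathcal{C}^{\alpha'}}\lesssim d^{-j}e^{q\epsilon j}D^{\alpha' j}$, which is summable for every $\alpha'<\log d/(\log D+q\epsilon)$, recovering the paper's exponent range $\alpha'<\log d/\chi_{top}(\f)$ as $\epsilon\to0$. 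Your remaining steps are fine as sketched: uniform convergence of the potentials plus Bedford--Taylor gives $d^{-nk}F_n^*\om^k\to T_{\f}^k$, and the exact term needs no ``multiplying out,'' since $|\la d^{-nk}dd^cF_n^*S,\chi\ra|\lesssim\|\chi\|_{\mathcal{C}^2}\,d^{-nk}\int_{\pk}F_n^*\om^{k-1}\wedge\om=\|\chi\|_{\mathcal{C}^2}\,d^{-n}\to0$ by the cohomological computation.
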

  We remark that since $dist(\cdot,\cdot)\leq 1$ condition (\ref{B}) is equivalent to $\displaystyle \lim_{j\to\infty}\frac1j\log dist(f_j,\mathcal{M})=0.$ Note that any bounded sequence (i.e. away from $\mathcal{M}$) falls in the framework of Theorem \ref{basic}.  
  
In \S \ref{EME}, we provide an alternative proof for H\"{o}lder continuity of local potentials of invariant measures $\mu_{\f}$ given by Theorem \ref{basic}. Furthermore, in Theorem \ref{mix} we obtain strong mixing properties of sequential holomorphic dynamical system $(\pk,\B,\f,\p).$ As a consequence, we prove that the system $(\pk,\B,\f,\p)$ is exact, a strong form of ergodicity (Theorem \ref{exactthm}).

In what follows for a given sequence $\f=(f_j)_{j\geq0}\subset \Hd$ we let $\f_n:=(f_{n},f_{n+1},\dots)$ and denote the associated measure defined in Theorem \ref{basic} by $\mu_n.$  Our main result gives an Almost Sure Invariance Principle (ASIP) for sequences of holomorphic endomorphisms verifying hypotheses (\ref{A}) \& (\ref{B}) and for sequences of dsh and H\"{o}lder continuous observables. 
\begin{thm}[ASIP]\label{th1}
Let $\f=(f_0,f_1,\dots)$ be a sequence of holomorphic maps of fixed algebraic degree $d\geq 2$ verifying (\ref{A}) and (\ref{B}). Let also $\psi_n$ be a sequence of DSH (respectively, $\alpha$-H\"{o}lder continuous) functions satisfying $\sup_n\|\psi_n\|_{DSH}<\infty$ (respectively, $\sup_n\|\psi_n\|_{\mathscr{C}_{\alpha}}<\infty$). Assume that $\int_{\pk}\psi_nd\mu_{n}=0$ and the variances $\sigma_n^2:=\int_{\pk}(\sum_{j=0}^{n-1}\psi_j\circ F_j)^2d\mu_{\f}$ satisfy
\begin{equation}\label{variance}
Cn^{\frac14+\epsilon}\leq \sigma_n\ \ \text{for some}\ C,\epsilon>0.
\end{equation}
Then on an extended probability space there exists a sequence of centered independent Gaussian random variables $(Z_j)_{j\in \Bbb{N}}$ such that $\sum_{j=0}^{\infty}\E[Z_j^2]=\sigma_n^2(1+o(1))$ and that 
\begin{equation}\label{as}
|\sum_{j=0}^{n-1}\psi_j\circ F_j-\sum_{j=0}^{n-1}Z_j|= o(\sigma_n^{\gamma})\ \ \text{almost surely}
\end{equation}
where $\frac{1+2\epsilon}{1+4\epsilon}<\gamma<1.$
\end{thm}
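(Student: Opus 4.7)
The plan is to verify the hypotheses of the abstract almost sure invariance principle of Cuny--Merlev\`ede \cite{CuMe} for the partial sums $S_n=\sum_{j=0}^{n-1}\psi_j\circ F_j$ under $\p$. Their theorem is tailored to non-stationary sequences and produces an ASIP with rate $o(\sigma_n^{\gamma})$ in the required range, provided one has (i) uniform higher-moment control of the summands, (ii) a sufficiently rapid decay of the conditional expectations $\E_{\p}[\psi_j\circ F_j\mid\mathcal{F}_i]$ in a suitable filtration, and (iii) the variance lower bound $\sigma_n\geq Cn^{1/4+\epsilon}$, which is exactly (\ref{variance}).

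First I would fix the filtration $\mathcal{F}_n:=F_n^{-1}\B$ and introduce, for each $n$, the Perron--Frobenius operator $\Lambda_n$ of $f_n$ with respect to the pair $(\mu_n,\mu_{n+1})$ furnished by Theorem \ref{basic}, characterized by
\[
\int(\Lambda_n\phi)\cdot\chi\,d\mu_{n+1}=\int\phi\cdot(\chi\circ f_n)\,d\mu_n .
\]
Setting $\Lambda_i^{j}:=\Lambda_{j-1}\circ\cdots\circ\Lambda_i$, one has the identity $\E_{\p}[\psi_j\circ F_j\mid\mathcal{F}_i]=(\Lambda_i^{j}\psi_j)\circ F_i$, so that the conditional expectations are governed by the non-autonomous transfer operator. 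The dynamical input to be established is the uniform exponential decay
\[
\|\Lambda_i^{j}\varphi\|_{L^{2}(\mu_j)}\leq C\lambda^{j-i}\|\varphi\|_{\ast}\qquad(i\leq j),
\]
for some $\lambda<1$ independent of $i$, where $\|\cdot\|_{\ast}$ denotes the DSH or $\mathscr{C}_{\alpha}$ norm and $\varphi$ has $\mu_j$-mean zero. This is the non-autonomous substitute for the spectral gap of a fixed transfer operator; it should follow from the H\"older regularity of the potentials of $\mu_n$ given by Theorem \ref{basic}, combined with the mixing statement of Theorem \ref{mix}.

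Once this decay is in hand, the verification of the Cuny--Merlev\`ede hypotheses is essentially bookkeeping. The assumptions $\sup_n\|\psi_n\|_{DSH}<\infty$ (respectively $\sup_n\|\psi_n\|_{\mathscr{C}_{\alpha}}<\infty$) yield uniform $L^{p}(\p)$ bounds on the summands $\psi_j\circ F_j$ for every $p<\infty$, since $\p$ has H\"older continuous potentials. The transfer-operator estimate then gives exponential decay of $\|\E_{\p}[\psi_j\circ F_j\mid\mathcal{F}_i]\|_{L^{2}(\p)}$ in $j-i$, which is strictly stronger than the summability required by \cite{CuMe}. Together with the variance hypothesis (\ref{variance}), this places our sequence in the scope of the abstract theorem, from which the conclusion (\ref{as}) follows with the prescribed range of $\gamma$.

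The main obstacle is precisely the uniform exponential decay of $\Lambda_i^{j}$ on zero-mean observables, with constants independent of $i$. In the autonomous setting this comes from a spectral gap of a single Perron--Frobenius operator; here both the operators $\Lambda_n$ and the reference measures $\mu_n$ vary with $n$, so one must exploit hypothesis (\ref{A}) to secure uniform H\"older bounds on the local potentials of $\mu_n$ and hypothesis (\ref{B}) to rule out degenerations at the tail of $\f$. Assembling these into a uniform contraction rate $\lambda<1$ for the composed operators, and then passing from the $\|\cdot\|_{\ast}$ bound to the $L^{2}$ conditional-expectation bound demanded by \cite{CuMe}, is the delicate technical core on which the entire argument rests.
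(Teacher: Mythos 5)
Your overall strategy---reduce to the abstract ASIP of Cuny--Merlev\`ede via the non-autonomous operators $P_j$ and their exponential contraction on zero-mean DSH/H\"older observables---is the same as the paper's, and the contraction estimate you single out as the delicate core is indeed what the paper supplies (Lemmas \ref{compact} and \ref{hholder}, resting on Lemma \ref{dsh} and hypotheses (\ref{A})--(\ref{B})). But there is a genuine gap in how you invoke the abstract theorem. Theorem \ref{CuMe} does not take ``decay of conditional expectations of a general adapted sequence'' as a hypothesis: it applies to \emph{reversed martingale differences}, i.e.\ it requires $\E[U_j\,|\,\mathscr{F}_{j+1}]=0$ exactly, not merely exponentially small conditional expectations. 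So you cannot feed $S_n=\sum_{j<n}\psi_j\circ F_j$ to it directly. The paper first performs a Gordin-type approximation: it sets $h_0\equiv 0$, $h_j=P_{j-1}(\psi_{j-1}+h_{j-1})$, $U_j=(\psi_j+h_j-h_{j+1}\circ f_j)\circ F_j$, checks $\E[U_j\,|\,\B_{j+1}]=0$, and controls the error $S_n-\sum_{j<n}U_j=h_n\circ F_n$ in $L^2_{\p}$ using the very contraction estimates you describe (this is where they actually enter). That construction is absent from your proposal. There is also an index slip: since $\B_i=F_i^{-1}\B$ is non-increasing, the relevant conditional expectations are $\E[\psi_j\circ F_j\,|\,\B_i]$ for $i\ge j$, equal to $(P_{i-1}\cdots P_j\psi_j)\circ F_i$, not the formula you wrote with $i\le j$.

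More seriously, even after the martingale reduction the verification is not ``bookkeeping'': condition (\ref{m1}) of Theorem \ref{CuMe} demands the almost sure estimate $\sum_{j<n}\big(\E[U_j^2\,|\,\B_{j+1}]-\E[U_j^2]\big)=o(a_n)$, an a.s.\ strong law for the \emph{squares} of the observables, which is not a consequence of moment bounds plus decay of conditional expectations. In the paper this is the technical core of the proof: one splits off $\E[U_j^2\,|\,\B_{j+1}]-U_j^2$ (handled by Lemma \ref{lemCuMe} and Kronecker's lemma) and must then show $\sum_{j<n}\big(U_j^2-\E[U_j^2]\big)=o(a_n)$ a.s., which is established through the Gal--Koksma strong law of large numbers (Theorem \ref{SLLN}), itself relying on decay of \emph{multiple} correlations (Corollary \ref{dshcompact}); for H\"older observables an extra regularization of $\phi_j^2$ is needed since squares of H\"older functions are not DSH. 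None of this appears in your plan, so as written it does not yield the conclusion, even granting the transfer-operator decay you postulate.
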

 
 Some remarks are in order. The assumption $\int_{\pk}\psi_nd\mu_{n}=0$ is not restrictive as we may replace $\psi_n$ with $\psi_n-\int_{\pk}\psi_nd\mu_n.$ An immediate consequence of Theorem \ref{th1} is CLT that is 
 \begin{equation}
 \frac{1}{\sigma_n}\sum_{j=0}^{n-1}\psi_j\circ F_j\to\mathcal{N}(0,1)
 \end{equation}
 in distribution $\p$ (cf. \cite[\S 1]{PS75}). Another consequence is LIL which implies that 
 \begin{equation*}
 \limsup_{n\to\infty}[\frac{1}{\sigma_n\sqrt{2\log\log(\sigma_n)}}\sum_{j=0}^{n-1}\psi_j\circ F_j]=1 \ \ \text{almost surely}
 \end{equation*}
Finally, we remark that for an autonomous holomorphic dynamical system (i.e. $f=f_j\ \forall j$) and a single observable (i.e. $\psi=\psi_j,\ \forall j$) we have either $\sigma_n$ is bounded (in this case $\psi$ is a coboundary i.e. $\psi=\zeta-\zeta\circ f$ for some $\zeta\in L^2_{\mu_f}$) or the $\sigma_n$'s is of order $\sqrt{n}$. In this case, Theorem \ref{th1} implies that
\begin{equation}
|\sum_{j=0}^{n-1}\psi\circ f^j-\sum_{j=0}^{n-1}Z_j|=o(n^{\gamma})\  \text{almost surely}
\end{equation}
where $\frac38<\gamma<\frac12.$
 Hence, we obtain a slightly improved version of \cite[Theorem C]{Dup} as Dupont's result does not give a lower bound for the exponent $\gamma$.

I would like to thank to Referee whose comments improved the presentation of this paper.


\section{Ergodicity and Mixing for Non-autonomous systems} \label{EME}

\subsection{Invariant measures}\label{im}
 
 \par For a sequence of endomorphisms $\f=(f_j)_{j\geq0}\subset \Hd$ 
 we define \textit{topological Lyapunov exponent}
$$\chi_{top}(\f):=\lim_{n\to\infty}\frac1n\log\sup_{x\in \pk}\|D_xF_{j}\|$$ 
where $\|\cdot\|$ is a fixed norm. Clearly, this definition does not depend 
on the choice of the norm. Note that the limit exists (possibly infinite) due to
 the  sub-multiplicity of the sequence $\sup_{x\in \pk}\|D_xF_{j}\|.$ The next lemma 
will be useful in the sequel:

\begin{lem}
Let $\f=(f_j)_{j\geq0}\subset \Hd$ verifying (\ref{A}). Then $0<\chi_{top}(\f)<\infty.$
\end{lem}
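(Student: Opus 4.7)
The plan is to prove the bounds $\chi_{top}(\f)<\infty$ and $\chi_{top}(\f)>0$ separately; each reduces via the chain rule to a one-step estimate on individual maps $f_j\in\Hd$.

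For finiteness, I would first establish the standard quantitative derivative bound
\[\sup_{x\in\pk}\|D_x f\|\leq C\,dist(f,\M)^{-a},\qquad f\in\Hd,\]
with constants $C=C(k,d)$ and $a=a(k,d)>0$. To see this, take a homogeneous lift $F=(F_0,\dots,F_k)$ of $f$ to $\Bbb{C}^{k+1}$ with normalized coefficients; the Fubini-Study derivative at $[z]\in\pk$ is controlled by $\|DF(z)\|/\|F(z)\|$ on the unit sphere, the numerator is uniformly bounded by the degree condition, and the denominator is bounded below by a positive power of $dist(f,\M)$ via a {\L}ojasiewicz-type inequality, since $\M$ is precisely the algebraic locus where the resultant of $F$ vanishes. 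The chain rule then gives
\[\sup_{x\in\pk}\|D_x F_n\|\leq C^n\prod_{j=0}^{n-1}dist(f_j,\M)^{-a},\]
and combining with hypothesis (\ref{A}) yields $\chi_{top}(\f)\leq\log C-a\liminf_n\tfrac1n\sum_{j=0}^{n-1}\log dist(f_j,\M)<\infty$.

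For positivity, I would exploit the cohomological fact that each $f_j$ has algebraic degree $d$, so $[F_n^*\omega_{FS}]=d^n[\omega_{FS}]$ in $H^{1,1}(\pk)$ and in particular
\[d^n=\int_{\pk}F_n^*\omega_{FS}\wedge\omega_{FS}^{k-1}.\]
A pointwise linear-algebra bound $F_n^*\omega_{FS}\wedge\omega_{FS}^{k-1}\leq C_k\|D_xF_n\|^2\,\omega_{FS}^k$ (from the fact that the matrix of the pullback $(1,1)$-form in an orthonormal frame has entries bounded by $\|D_xF_n\|^2$) then gives, after integration, $d^n\leq C_k\sup_x\|D_xF_n\|^2$. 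Taking $\tfrac1n\log$ yields $\chi_{top}(\f)\geq\tfrac12\log d>0$ since $d\geq 2$.

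The main technical point is the single-map derivative estimate in terms of $dist(f,\M)$: one needs to make explicit the polynomial blow-up rate of $\sup\|Df\|$ near the irreducible hypersurface $\M\subset\pN$, which reduces to a {\L}ojasiewicz-type inequality for the defining resultant. Granting this estimate, both bounds follow from the chain rule and the cohomological identity above; notice that only hypothesis (\ref{A}) is used here, the stronger individual tail assumption (\ref{B}) playing no role.
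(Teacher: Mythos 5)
Your proposal is correct, and for the upper bound it follows the same route as the paper: a one-step estimate $\sup_x\|D_xf\|\leq C\,dist(f,\M)^{-q}$, the chain rule, and hypothesis (\ref{A}). The difference is in how that one-step estimate is handled: the paper simply invokes \cite[Lemma 2.1]{DD} applied to the meromorphic evaluation map $\Psi(g,x)=g(x)$ on $\Md\times\pk$, whereas you sketch a direct proof via homogeneous lifts, bounding $\|D_{[z]}f\|$ by $\|DF(z)\|/\|F(z)\|$ on the unit sphere and controlling $\min_{\|z\|=1}\|F(z)\|$ from below by a power of $dist(f,\M)$ through a {\L}ojasiewicz-type inequality attached to the resultant locus; this is essentially how such estimates are proved, though as you acknowledge it stays at sketch level, so it buys self-containedness at the cost of an argument the paper gets for free by citation. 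Where your write-up genuinely adds something is the positivity: the paper's proof only records the upper bound $\frac1n\log M_n\leq C-\frac{q}{n}\sum_j\log dist(f_j,\M)$ and leaves $\chi_{top}(\f)>0$ unargued, while you supply a clean cohomological proof, namely $d^n=\int_{\pk}F_n^*\om\wedge\om^{k-1}\leq C_k\,M_n^2$ (using $F_n^*\om\leq\|D_xF_n\|^2\om$ pointwise and that wedging with $\om^{k-1}$ preserves positivity), which even gives the quantitative bound $\chi_{top}(\f)\geq\frac12\log d$. Your observation that only (\ref{A}) is needed, and not (\ref{B}), matches the statement and the paper's argument.
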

\begin{proof}
Note that 
$$\Psi:\Md\times \pk \to\pk$$
$$\Psi(g,x)=g(x)$$ 
defines a meromorphic map. Then it follows from \cite[Lemma 2.1]{DD} that there exists $C>0$ 
and $q\geq 1$ such that
$$\|D_xf_j\|\leq \|D_{f_j,x}\Psi\|\leq Cdist(f_j,\M)^{-q}, \ \forall j\geq 0.$$
Then by chain rule
\begin{equation}\label{mn}
M_n:=\sup_{x\in\pk}\|D_xF_n\|\leq C^n\prod_{j=0}^{n-1}dist(f_j,\M)^{-q}
\end{equation}
and this in turn implies that
$$\frac1n\log(\sup_{x\in\pk}\|D_xF_n\|)\leq C-\frac{q}{n}\sum_{j=0}^{n-1}\log dist(f_j,\M).$$
\end{proof}
The following result was motivated by \cite[Remark 1.7.2]{Si}.
\begin{thm}\label{thm2:1}
Let $\f=(f_j)_{j\geq0}\subset \Hd$ verifying (\ref{A}) and (\ref{B}). Then
$$\frac{1}{d^n}F_n^*\om\to \om+dd^cg_{\f}$$ 
in the sense of currents. Moreover, Green function $g_f$ is  $\alpha$-H\"{o}lder continuous for every $\alpha<\frac{\log d}{\chi_{top}(\f)}.$
\end{thm}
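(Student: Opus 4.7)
The plan is to first write $d^{-n}F_n^*\om$ in the form $\om+dd^c g_n$ via an explicit telescoping formula, then show $g_n\to g_\f$ uniformly on $\pk$, and finally establish the H\"older regularity of the limit by an interpolation argument. For each $f_j\in\Hd$ the smooth $(1,1)$ form $d^{-1}f_j^*\om-\om$ is $dd^c$-exact on $\pk$, so there is a unique smooth quasi-potential $u_j$ with
$$dd^c u_j=\tfrac{1}{d}f_j^*\om-\om,\qquad \int_{\pk} u_j\,\om^k=0.$$
Iterating the identity $\frac{1}{d^n}F_n^*\om=\frac{1}{d^{n-1}}F_{n-1}^*\bigl(\om+dd^c u_{n-1}\bigr)$ and using $F_n=f_{n-1}\circ F_{n-1}$ yields by induction
$$\frac{1}{d^n}F_n^*\om=\om+dd^c g_n,\qquad g_n:=\sum_{j=0}^{n-1}\frac{1}{d^{j}}\,u_j\circ F_j,$$
with the convention $F_0=\mathrm{id}$.

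The next step is to produce uniform estimates on the building blocks $u_j$. Arguing as in the proof of the preceding lemma and \cite[Lemma 2.1]{DD}, the smoothness of the evaluation map $\Psi:\Md\times\pk\to\pk$ near $\Hd$ gives constants $C,q>0$, independent of $j$, such that
$$\|u_j\|_\infty\leq C\bigl(1+|\log dist(f_j,\M)|\bigr),\qquad \mathrm{Lip}(u_j)\leq C\,dist(f_j,\M)^{-q}.$$
Hypothesis (\ref{B}) forces $\log dist(f_j,\M)=o(j)$, so $\|u_j\|_\infty=o(j)$ and $\mathrm{Lip}(u_j)=e^{o(j)}$. Thus $\|d^{-j}u_j\circ F_j\|_\infty\leq d^{-j}o(j)$ is a summable sequence, so $g_n$ converges uniformly on $\pk$ to a continuous function $g_\f$. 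Taking $dd^c$ proves the current convergence.

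For H\"older regularity we appeal to the standard interpolation inequality: if $v\in C^{0,1}(\pk)$ with $\|v\|_\infty\leq A$ and $\mathrm{Lip}(v)\leq L$, then for any $\alpha\in(0,1)$
$$[v]_{C^\alpha}\leq 2\,A^{1-\alpha}L^{\alpha}.$$
Applied to $v_j=d^{-j}u_j\circ F_j$, whose Lipschitz constant is at most $d^{-j}\mathrm{Lip}(u_j)\,M_j$ with $M_j=\sup_{\pk}\|D_xF_j\|$, we obtain for every $\varepsilon>0$ and all $j$ large enough (using $M_j\leq e^{j(\chi_{top}(\f)+\varepsilon)}$)
$$[v_j]_{C^\alpha}\leq C\,d^{-j}\bigl(o(j)\bigr)^{1-\alpha}\Bigl(e^{o(j)}\,e^{j(\chi_{top}(\f)+\varepsilon)}\Bigr)^{\alpha}\leq e^{\,j(\alpha\chi_{top}(\f)-\log d+O(\varepsilon))}.$$
If $\alpha<\log d/\chi_{top}(\f)$, then choosing $\varepsilon$ small enough makes this exponent strictly negative, so $\sum_j [v_j]_{C^\alpha}<\infty$. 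Combined with uniform convergence this shows $g_\f=\sum_{j\geq 0}v_j$ is $\alpha$-H\"older on $\pk$.

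The main obstacle is the first quantitative step: producing the explicit bounds on $\|u_j\|_\infty$ and $\mathrm{Lip}(u_j)$ in terms of $dist(f_j,\M)$. These reduce to derivative estimates for the meromorphic evaluation map $\Psi$ near the irreducible hypersurface $\M$, and to a standard $L^\infty$ estimate for the quasi-potential of a smooth form whose mass depends polynomially on $dist(f_j,\M)^{-1}$. Once these are granted, hypotheses (\ref{A})--(\ref{B}) convert polynomial-in-$dist(f_j,\M)^{-1}$ losses into subexponential-in-$j$ factors that are absorbed by the geometric factor $d^{-j}$, and the interpolation step delivers the sharp H\"older exponent.
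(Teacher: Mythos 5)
Your proposal follows essentially the same route as the paper: the same telescoping decomposition $d^{-n}F_n^*\om=\om+dd^c\sum_{j<n}d^{-j}u_j\circ F_j$, the same quantitative control of $u_j$ by a power of $dist(f_j,\M)^{-1}$ (the paper cites \cite[Proposition 3]{dTh1}, which gives $\|u_j\|_{\mathcal{C}^1}\leq C\,dist(f_j,\M)^{-q}$, making your separate logarithmic sup-norm bound unnecessary), hypothesis (\ref{B}) turning these losses into subexponential factors absorbed by $d^{-j}$, and the same term-by-term H\"older summation with $M_j\leq e^{j(\chi_{top}(\f)+\varepsilon)}$. The only cosmetic difference is that you interpolate $[v_j]_{C^\alpha}\leq 2A^{1-\alpha}L^{\alpha}$, whereas the paper uses $dist\leq 1$ to write $|u_j(a)-u_j(b)|\leq \mathrm{Lip}(u_j)\,dist(a,b)^{\alpha}$ directly; both yield the same exponent $\alpha<\log d/\chi_{top}(\f)$.
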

\begin{proof}
First, we sketch the proof of existence of the limit. Let $u_j$ be a smooth qpsh function defined by
$$\frac1d f_j^*\om=\om+dd^cu_j.$$ Then
\begin{eqnarray}\label{green1}
\frac{1}{d^n}F_n^*\om &=&\frac{1}{d^n}f_0^*\cdots f_{n-1}^*\om\\
&=& \om+dd^c\sum_{j=0}^{n-1}\frac{1}{d^j}u_j\circ F_j \nonumber\\
&=:& \om+dd^cg_n \nonumber
\end{eqnarray}
By \cite[Proposition 3]{dTh1} there exists $C>0$ and $q\geq 1$ such that
\begin{equation}\label{c1}
\|u_j\|_{\mathcal{C}^1}\leq Cdist(f_j,\M)^{-q},\ \forall j\in\Bbb{N}.
\end{equation}
By (\ref{B}) for each $\epsilon>0$ there exists $j_0\in\Bbb{N}$ such that
\begin{equation}\label{bound}
dist(f_j,\M)\geq e^{-\epsilon j},\ \forall j\geq j_0.
\end{equation}

Now,
\begin{eqnarray*}
\sup_{x\in \pk}|g_n(x)-g_m(x)| &=&| \sum_{j=n}^{m-1}\frac{1}{d^j} u_j\circ F_j|\\
&\leq& \sum_{n+1}^{m-1}\frac{\sup_{x\in \pk}|u_j(x)|}{d^j}\\
&\leq & C\sum_{j=n}^{m-1}\frac{e^{\epsilon q j}}{d^j}.
\end{eqnarray*}
This imples that $g_n\to g_{\f}$ uniformly on $\pk$ for some continuous qpsh function $g_{\f}.$ 

In order to prove H\"{o}lder continuity, let $0<\alpha<\frac{\log d}{\chi_{top}(\f)}$. Note that by (\ref{mn})
$$dist(F_n(x),F_n(y))\leq C^nM_ndist(x,y).$$
Then by (\ref{c1}) and (\ref{bound}) for small $\epsilon>0$ we obtain
\begin{eqnarray*}
|g_{\f}(x)-g_{\f}(y)| &\leq& \sum_{j\geq 0}\frac{|u_j(F_j(x))-u_j(F_j(y))|}{d^j}\\
&\leq &\sum_{j\geq 0}\frac{Cdist(f_j,\M)^{-q}dist(F_j(x),F_j(y))^{\alpha}}{d^j}\\
&\leq & Cdist(x,y)^{\alpha}\sum_{j\geq 0}\frac{dist(f_j,\M)^{-q}C^{\alpha j}e^{\chi_{top}(\f)\alpha j}}{d^j}\\
&\leq & C_1dist(x,y)^{\alpha}.
\end{eqnarray*}
\end{proof}
 Since the \textit{dynamical Green} current $T_{\f}$ given by Theorem \ref{thm2:1} has H\"{o}lder continuous potentials by Bedford-Taylor theory the exterior powers
 $$T^p_{\f}:=T_{\f}\wedge\dots \wedge T_{\f}$$ are also well-defined positive closed $(p,p)$ currents for each $1\leq p\leq k.$ Letting $T_{\f_n}$ to be the dynamical Green current associated with $\f_n:=(f_{n},f_{n+1},\dots)$ by (\ref{green1}) it is easy to see that they inherit the invariance properties
 \begin{equation}
 f_{n-1}^*T_{\f_n}=d\ T_{\f_{n-1}}\ \text{and}\ (f_{n-1})_*T_{\f_{n-1}}=d^{k-1}T_{\f_n}\ \ \forall n\geq 1.
 \end{equation}
 The next result is a direct consequence of  \cite[Proposition 11]{dTh1} and Theorem \ref{basic}:
 \begin{prop}
 The top degree dynamical Green current 
 $$T^k_{\f}=T_{\f}\wedge\dots \wedge T_{\f}$$ coincides with the measure $\mu_{\f}$ given by Theorem \ref{basic}. In particular, $\mu_{\f}$ has continuous H\"{o}lder potentials and
\begin{equation}\label{invariance}
f_{n-1}^*\mu_n=d^k\mu_{n-1}\ \text{and}\ (f_{n-1})_*\mu_{n-1}=\mu_n\ \ \forall n\geq 1.
\end{equation}
 \end{prop}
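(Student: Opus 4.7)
The plan is to identify the limit $T_\f^k$ with $\mu_\f$ by evaluating both against the same approximating sequence, and then deduce the invariance identities from the convergence together with the behavior of pullback/pushforward under composition.

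First, I would take $\nu = \om^k$, which is a smooth probability measure on $\pk$. On the one hand, by Theorem \ref{basic} we have $\frac{1}{d^{nk}}F_n^*\om^k \to \mu_\f$ weakly. On the other hand, writing $\frac{1}{d^n}F_n^*\om = \om + dd^c g_n$ as in (\ref{green1}), the elementary identity $\frac{1}{d^{nk}}F_n^*\om^k = \bigl(\tfrac{1}{d^n}F_n^*\om\bigr)^k = (\om + dd^c g_n)^k$ holds. Since Theorem \ref{thm2:1} gives $g_n \to g_\f$ uniformly with H\"older continuous limit, the standard continuity of the Monge--Amp\`ere operator on sequences of qpsh functions converging uniformly (Bedford--Taylor, in the form invoked via \cite[Proposition 11]{dTh1}) yields $(\om + dd^c g_n)^k \to (\om + dd^c g_\f)^k = T_\f^k$. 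Comparing the two limits, $T_\f^k = \mu_\f$. The H\"older continuity of the potentials of $\mu_\f$ is then automatic from the H\"older continuity of $g_\f$ produced by Theorem \ref{thm2:1}.

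Next I would derive the invariance (\ref{invariance}). For the sequence $\f_n=(f_n,f_{n+1},\dots)$ the proposition just proved gives $\mu_n = \lim_{m\to\infty} d^{-mk}(f_{n+m-1}\circ\dots\circ f_n)^*\om^k$. Writing the analogous expression for $\mu_{n-1}$ and using the factorization $f_{n+m-1}\circ\dots\circ f_{n-1} = (f_{n+m-1}\circ\dots\circ f_n)\circ f_{n-1}$, I can pull out $f_{n-1}^*$:
\begin{equation*}
\frac{1}{d^{(m+1)k}}(f_{n+m-1}\circ\dots\circ f_{n-1})^*\om^k \;=\; \frac{1}{d^k}\,f_{n-1}^*\!\left[\frac{1}{d^{mk}}(f_{n+m-1}\circ\dots\circ f_n)^*\om^k\right].
\end{equation*}
Letting $m\to\infty$, the left side tends to $\mu_{n-1}$, and since $f_{n-1}^*$ acts continuously on measures with H\"older continuous potentials (they can be written in local charts as $dd^c$ of a bounded psh function), the right side tends to $d^{-k} f_{n-1}^*\mu_n$. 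This yields $f_{n-1}^*\mu_n = d^k\mu_{n-1}$. The second identity follows by applying $(f_{n-1})_*$ to both sides and using $(f_{n-1})_* f_{n-1}^* = d^k\cdot\mathrm{id}$ on measures (the topological degree of $f_{n-1}$ is $d^k$).

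The main obstacle is justifying the passage $(\om + dd^c g_n)^k \to (\om + dd^c g_\f)^k$; this relies on the fact that $g_n$ converge uniformly, not merely in $L^1$, and on the continuity of Monge--Amp\`ere under such convergence, which is exactly the content borrowed from \cite[Proposition 11]{dTh1}. Everything else is a formal manipulation of the defining limits.
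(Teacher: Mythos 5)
Your argument is correct, and in substance it is the route the paper intends: the paper gives no written proof, citing \cite[Proposition 11]{dTh1} together with Theorem \ref{basic}, and your first half (comparing the two limits of $d^{-nk}F_n^*\om^k$, i.e.\ $\mu_{\f}$ from Theorem \ref{basic} versus $(\om+dd^cg_n)^k\to(\om+dd^cg_{\f})^k$ by Bedford--Taylor continuity under the uniform convergence supplied by Theorem \ref{thm2:1}) is exactly the content being invoked, and it also gives the H\"older potentials. For the invariance relations the paper has a slightly shorter path available: it records just before the proposition the current-level identities $f_{n-1}^*T_{\f_n}=d\,T_{\f_{n-1}}$ and $(f_{n-1})_*T_{\f_{n-1}}=d^{k-1}T_{\f_n}$, and since these currents have continuous (indeed H\"older) potentials, pullback commutes with the $k$-fold wedge product, so $f_{n-1}^*\mu_n=(f_{n-1}^*T_{\f_n})^k=d^k\mu_{n-1}$ follows at once; your version instead passes to the limit in the defining sequence, which also works. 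One small repair to your write-up: the continuity of $f_{n-1}^*$ along the weakly convergent sequence $d^{-mk}(f_{n+m-1}\circ\dots\circ f_n)^*\om^k\to\mu_n$ has nothing to do with the limit having H\"older potentials; the correct (and standard) justification is duality, $\la f_{n-1}^*\nu,\varphi\ra=\la\nu,(f_{n-1})_*\varphi\ra$, combined with the fact that $(f_{n-1})_*\varphi$ is continuous whenever $\varphi$ is, because $f_{n-1}$ is a finite holomorphic self-map of $\pk$. Your final step $(f_{n-1})_*f_{n-1}^*=d^k\,\mathrm{id}$ on measures is fine, since $d^k$ is the topological degree.
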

\subsection{Sequential  Ergodicity, Mixing and Exactness}\label{ee} 

In what follows we let $\B$ denote the Borel algebra on $\pk$ and $(\pk,\B,\f,\p)$ be a \textit{sequential holomorphic dynamical system} that is $\f=(f_0,f_1,\dots)$ is a sequence of holomorphic maps $f_j\in\Hd$ satisfying (\ref{A}) and (\ref{B}) and $\p$ be the measure given by Theorem \ref{basic} . We say that the system $(\pk,\B,\f,\p)$ is \textit{mean ergodic} if 
\begin{equation}\label{e1}
\lim_{n\to \infty}\frac1n\sum_{j=0}^{n-1}[\p(B\cap F_j^{-1}(A))-\p(B)\mu_j(A)]=0
\end{equation}
for all $\B$-measurable sets $A,B$. Note that (\ref{e1}) is equivalent to
\begin{equation}\label{e2}
\lim_{n\to \infty}\frac1n\sum_{j=0}^{n-1}[\int_{\pk}\varphi\cdot \psi\circ F_jd\p\ -\int_{\pk}\varphi d\p \int_{\pk}\psi \mu_j] =0
\end{equation}
for all continuous (equivalently smooth, bounded or $L^2_{\p}$) functions $\varphi,\psi$ on $\pk.$ Using the the property $P_j1=1$ and the argument in \cite[Remark 1.3] {CR07} we see that (\ref{e1}) is equivalent to convergence in $L^q$-norm for $1\leq q<\infty:$

\begin{equation}\label{e3}
\lim_{n\to \infty}\|\frac1n\sum_{j=0}^{n-1}\psi\circ F_j-\int \psi d\mu_j \|_{L^q_{\p}}\to 0.
\end{equation}
Finally, we remark that in the setting of autonomous dynamical systems the mean ergodicity is equivalent to Birkhoff's ergodic theorem to which we refer here as point-wise ergodic. However, for non-autonomous systems these two concepts are not equivalent (cf. \cite{CR07}). We say that the sequential holomorphic dynamical system $(\pk,\B,\f,\p)$ is \textit{point-wise ergodic} if
\begin{equation}\label{e4}
\lim_{n\to \infty}\frac1n \sum_{j=0}^{n-1}[\psi\circ F_j(x)-\int \psi d \mu_j]=0\ \ \p-a.s.
\end{equation}
for all $L^2_{\p}$ functions $\psi$ on $\pk.$
\par We say that the system  $(\pk,\B,\f,\p)$ is \textit{mixing} if for all smooth functions $\varphi,\psi$ on $\pk$ we have 
\begin{equation}
\int_{\pk}\varphi\cdot \psi\circ F_n d\p\ -\int \varphi d\p \int\psi d\mu_n\to 0.
\end{equation} 
In Theorem \ref{mix}, we prove that every sequential holomorphic dynamical system $(\pk,\B,\f,\p)$ is mixing and hence mean ergodic. In fact, we obtain a stronger form of mean ergodicity.  

\par For a given sequence $\f=(f_j)_{j\geq0}$ of holomorphic maps $f_j\in \Hd$ we define
\begin{equation}\label{filtration}
\B_j:=F_j^{-1}(\B)=f_0^{-1}\cdots f_{j-1}^{-1}(\B).
\end{equation}
Note that
$$\B=\B_0\supset \B_1\supset\cdots\supset \B_n.$$
We also denote the asymptotic $\sigma$-algebra
$$\B_{\infty}:=\cap_{j=0}^{\infty}\B_j.$$

\begin{defn}
A sequential holomorphic dynamical system $(\pk,\B,\f,\p)$ satisfying (\ref{A}) and (\ref{B}) is called \textit{exact} if the asymptotic $\sigma$-algebra 
$$\B_{\infty}=\{\emptyset,\pk\}$$ modulo sets of $\p$-measure zero.
\end{defn}
It follows from the definition of exact sequence that $\f$ is exact if and only if
$$\cap_{j=0}^{\infty}L^2_{\mu_{\f}}(\B_j)=\{constant\ functions\}$$
where
$$L^2_{\mu_{\f}}(\B_j)=\{\xi\in L^2_{\mu_{\f}}(\pk): \xi\ \text{is} \ \B_j\ \text{measurable}\}.$$

\par In what follows we let $\f=(f_j)_{j\geq0}\subset \Hd$ satisfying (\ref{A}) and (\ref{B}).
Then each $f_j\in \Hd$ induces a unitary operator 
$$T_{j}:L^2_{\mu_{j+1}}(\pk)\to L^2_{\mu_j}(\pk)$$
$$T_j(\varphi)= \varphi\circ f_j.$$
We denote the adjoint of this operator by 
$$P_j: L^2_{\mu_j}(\pk) \to  L^2_{\mu_{j+1}}(\pk)$$
$$P_j\psi(x)=d^{-k}\sum_{f_j(y)=x}\psi(y)$$
and let
$$\mathcal{P}_j=P_{j-1}\cdots P_1P_0. $$

\begin{thm} \label{exactthm}
Let $\f=(f_j)_{j\geq0}$ be a sequence of holomorphic maps in $\Hd$ satisfying (\ref{A}) and (\ref{B}). Then the system $(\pk,\B,\f,\p)$ is exact.
\end{thm}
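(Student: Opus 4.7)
The plan is to combine a reverse martingale argument with the quantitative mixing established in Theorem~\ref{mix}. Exactness is equivalent to showing $E[\xi\mid\B_\infty] = \int\xi\,d\mu_{\f}$ almost surely for every $\xi\in L^2_{\mu_{\f}}$. Since $\B_n\supset\B_{n+1}$ forms a decreasing filtration with intersection $\B_\infty$, the reverse martingale convergence theorem already gives $E[\xi\mid\B_n]\to E[\xi\mid\B_\infty]$ in $L^2_{\mu_{\f}}$, so the task reduces to showing $\|E[\xi\mid\B_n]\|_2\to 0$ whenever $\int\xi\,d\mu_{\f}=0$.

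The bridge to the transfer operator is the identity $E[\xi\mid\B_n] = (\mathcal{P}_n\xi)\circ F_n$, which follows because $F_n:(\pk,\mu_{\f})\to(\pk,\mu_n)$ is measure-preserving by (\ref{invariance}) and $\mathcal{P}_n$ is the $L^2$-adjoint of the isometry $\varphi\mapsto\varphi\circ F_n$. In particular, $\|E[\xi\mid\B_n]\|_{L^2_{\mu_{\f}}} = \|\mathcal{P}_n\xi\|_{L^2_{\mu_n}}$, and a direct Cauchy--Schwarz estimate together with $(f_j)_*\mu_j=\mu_{j+1}$ shows that each $P_j$, and hence $\mathcal{P}_n$, is an $L^2$-contraction. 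Assume $\int\xi\,d\mu_{\f}=0$ and first take $\xi$ DSH (or $\alpha$-H\"{o}lder); the adjoint identity yields
\begin{equation*}
\|\mathcal{P}_n\xi\|_{L^2_{\mu_n}}^2 \;=\; \int_{\pk}\xi\cdot(\mathcal{P}_n\xi)\circ F_n\,d\mu_{\f}.
\end{equation*}
I then apply Theorem~\ref{mix} with test observables $\varphi=\xi$ and $\psi=\mathcal{P}_n\xi$, in the quantitative form $|\int\varphi\cdot\psi\circ F_n\,d\mu_{\f}-\int\varphi\,d\mu_{\f}\int\psi\,d\mu_n|\leq\epsilon_n\|\varphi\|_{DSH}\|\psi\|_{L^2_{\mu_n}}$ with $\epsilon_n\to 0$. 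The mean-zero assumption kills the product of integrals, giving $\|\mathcal{P}_n\xi\|_{L^2_{\mu_n}}^2\leq\epsilon_n\|\xi\|_{DSH}\|\mathcal{P}_n\xi\|_{L^2_{\mu_n}}$, hence $\|\mathcal{P}_n\xi\|_{L^2_{\mu_n}}\to 0$. Since DSH (or H\"{o}lder) functions with zero mean are dense in the mean-zero subspace of $L^2_{\mu_{\f}}$ and $\mathcal{P}_n$ is a contraction, a standard $3\epsilon$-argument extends this conclusion to every mean-zero $\xi\in L^2_{\mu_{\f}}$. Via the reverse martingale convergence this identifies $E[\xi\mid\B_\infty]$ as the constant $\int\xi\,d\mu_{\f}$, establishing exactness.

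The main obstacle is extracting from Theorem~\ref{mix} the correct quantitative decay of correlations, namely an estimate whose right-hand side controls the second test function through its $L^2_{\mu_n}$-norm (or another quantity that stays bounded along the sequence $\mathcal{P}_n\xi$). The mixing as defined in the excerpt, requiring both observables to be smooth and providing no rate, is too weak because the auxiliary test function $\psi=\mathcal{P}_n\xi$ varies with $n$ and lives only in $L^2_{\mu_n}$. Should Theorem~\ref{mix} supply only a weaker statement, the alternative route is to use the H\"{o}lder regularity of the Green potentials from Theorem~\ref{thm2:1} to verify that $\mathcal{P}_n$ maps smooth observables into a bounded family in some H\"{o}lder class, and then to approximate $\mathcal{P}_n\xi$ uniformly in $n$ via a diagonal argument before invoking the weaker form of mixing.
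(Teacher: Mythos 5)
Your argument is correct and is essentially the paper's own proof: both reduce exactness, via reverse (Doob) martingale convergence and the identity $\E[\xi\mid\B_n]=(\mathcal{P}_n\xi)\circ F_n$ plus the contraction-and-density step, to exponential decay of $\|\mathcal{P}_n\xi\|$ for DSH observables --- the paper invokes Lemma \ref{compact} directly (in $L^1$, approximating by smooth functions and using $P_j1=1$), while you recover the same decay in $L^2$ from Theorem \ref{mix} by testing against $\mathcal{P}_n\xi$ itself, a cosmetic variation. The concern in your last paragraph is unfounded: Theorem \ref{mix} is precisely the quantitative estimate $Cd^{-n}\|\varphi\|_{L^p_{\mu_n}}\|\psi\|_{DSH}^{\p}$ (with the composed observable measured in $L^p_{\mu_n}$, $p>1$) that your argument needs, so no fallback route is required.
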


We remark that Theorem \ref{exactthm} generalizes \cite{Peters} in which exactness was obtained for bounded sequences $\f=(f_j)$ i.e.
$$dist(f_j,\M)\geq C>0, \ \forall j\geq 0.$$
The proof given in \cite{Peters} is based on showing that for each $F_j$ there are sufficiently many inverse branches on a disc away from the critical values for which the pre-images has small diameter. The latter argument is originally due to Briend and Duval \cite{BrDu2}. This method breaks down in the present setting and we provide a different approach.

\begin{proof}
 Note that another equivalent condition for exactness of the sequence $\f=(f_j)_{j\geq0}$ is 
\begin{equation}\label{exact}
\|\mathcal{P}_j\psi\|_{L^1_{\mu_j}}\to 0\ \text{as} \ j\to \infty
\end{equation}
 for every $\psi\in L^1_{\mu_{\f}}$ such that $\int_{\pk}\psi d\p=0.$
Indeed, given such $\psi$ it follows from Doob's martingale convergence theorem that the conditional expectations 
$$\E[\psi|\B_j]\to \E[\psi|\B_{\infty}]\ \text{in}\ L^1_{\mu_{\f}}$$ as $j\to \infty.$
Since $\E[\psi|\B_j]=(\mathcal{P}_j\psi)\circ F_j$ (cf. \cite[\S 5]{Bay}) by invariance properties (\ref{invariance}) we have
$$\|\E[\psi|\B_j]\|_{L^1_{\p}}=\|\mathcal{P}_j\psi\|_{L^1_{\mu_j}}$$ and the claim follows.

Finally, as $P_j1\equiv1\ \forall j,$ it is enough to verify the condition (\ref{exact}) for smooth functions and this follows from Lemma \ref{compact} below.
\end{proof}
Clearly, exactness implies mixing hence, mean ergodicity (\ref{e2}). Indeed, let $\varphi, \psi$ be smooth functions, we may assume that $\int_{\pk} \psi d\p=0.$ Then by (\ref{exact})
$$|\int_{\pk}\psi\cdot\varphi\circ F_n\ d\p|=|\int_{\pk}\mathcal{P}_n(\psi)\cdot\varphi\ d\mu_n|\leq C_{\varphi} \|\mathcal{P}_n\psi\|_{L^1_{\mu_n}}\to 0.$$ 
\subsubsection*{Sequential  Mixing}\label{mm}
In this section we explore mixing properties of the sequential holomorphic dynamical system $(\pk,\B,\f,\p).$ We start with some preliminaries:

\subsubsection{DSH Functions}
Recall that a function $\varphi$ is call a \textit{quasi-plurisubharmonic} (qpsh for short) if $\varphi$ can be locally written as sum of a smooth function and a pluri-subharmonic (psh) function. In what follows we denote by $L^1(\pk)$ where the norm is given by Fubini-Study volume form. We say that a function $\psi\in L^1(\pk)$ is  dsh if outside a pluripolar set $\psi=\varphi_1-\varphi_2$ where $\varphi_i$  are qpsh functions. This implies that $$dd^c\psi=T^+-T^-$$ for some positive closed $(1,1)$ currents $T^{\pm}$. Two dsh functions are identified if they coincide outside a pluripolar set; we denote the set of all dsh functions by $DSH(\pk)$. Note that dsh functions are stable under pull-back and push-forward operators induced by meromorphic self-maps of $\pk$ and have good compactness properties inherited from those of qpsh functions. Following \cite{DS3} one can define a norm on $DSH(\pk)$ as follows:
$$\|\psi\|_{DSH}:=\|\psi\|_{L^1(\pk)}+\inf\|T^{\pm}\|$$
where $dd^c\psi=T^+-T^-$ and the infimum is taken over all such representations. In what follows, $\lesssim$ and $\gtrsim$ denote inequalities up to a multiplicative constant. We remark that the currents $T^{\pm}$ have the same mass $\|T^{\pm}\|:=\int_{\pk}T^{\pm}\wedge \omega_{FS}$ as they are cohomologous and $\|\cdot\|_{DSH}\lesssim\|\cdot\|_{\mathscr{C}^2}.$ Moreover, it follows from properties of psh functions that $\|\cdot\|_{L^p}\lesssim \|\cdot\|_{DSH}$ for $1\leq p<\infty.$\\ \indent
 If $\mu$ is a probability measure on $\pk$ such that all qpsh functions are $\mu$-integrable then one can define 
 $$\|\psi\|_{DSH}^{\mu}:=|\la\mu,\psi\ra|+\inf\|T^{\pm}\|$$
where $T^{\pm}$ as above and $\la\mu,\psi\ra:=\int_{\pk}\psi d\mu$.
\begin{prop}\cite{DS3}\label{pro}
Let $\psi\in DSH(\pk)$ then there exists negative qpsh functions $\varphi_1,\varphi_2$ such that $\psi=\varphi_1-\varphi_2$ and $dd^c\varphi_i\geq -c\|\psi\|_{DSH}\omega_{FS}$ where $c>0$ independent of $\psi$ and $\varphi_i$'s. Moreover, $|\psi|$ is also a dsh function and $\||\psi|\|_{DSH}\leq c \|\psi\|_{DSH}.$ 
\end{prop}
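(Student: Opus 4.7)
The statement splits into two parts. First, one produces a canonical decomposition $\psi=\varphi_1-\varphi_2$ into negative qpsh functions with the quantitative lower bound $dd^c\varphi_i\geq -c\|\psi\|_{DSH}\om$. Second, starting from such a decomposition, one shows that $|\psi|$ inherits a dsh representation whose norm is controlled by $\|\psi\|_{DSH}$ up to a universal multiplicative constant.

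For the first part, the plan is to invoke the $dd^c$-lemma on $\pk$. Fix a representation $dd^c\psi=T^+-T^-$ realizing (or $\varepsilon$-approximating) the infimum in the definition of $\|\psi\|_{DSH}$, so that $m:=\|T^+\|=\|T^-\|\leq\|\psi\|_{DSH}$; the masses agree because $T^+-T^-$ is exact and hence $T^\pm$ lie in the same cohomology class. Since $H^{1,1}(\pk)\cong\Bbb{R}$ is generated by $[\om]$ and the $T^\pm$ have mass $m$, I can write $T^\pm=m\om+dd^c u_\pm$ with $u_\pm$ qpsh, normalized by $\sup u_\pm=0$. Then $u_\pm\leq 0$ and $dd^c u_\pm\geq -m\om\geq -\|\psi\|_{DSH}\om$. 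Because $dd^c(\psi-u_++u_-)=0$, the difference $\psi-u_++u_-$ equals a constant $C_0$; absorb $C_0$ into one of the $u_\pm$ and subtract a large enough positive constant from both to enforce strict negativity. This furnishes $\varphi_1,\varphi_2$ with the required properties, and $c=1$ works up to the $\varepsilon$-slack absorbed in taking the infimum.

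For the second part, the key identity is
\begin{equation*}
|\psi| \;=\; 2\max(\varphi_1,\varphi_2)\;-\;\varphi_1-\varphi_2.
\end{equation*}
Setting $U:=\max(\varphi_1,\varphi_2)$ and $C:=c\|\psi\|_{DSH}$, I observe that $U$ is again a negative qpsh function with $dd^c U\geq -C\om$: locally, if $\om=dd^c\rho$ then $\varphi_i+C\rho$ are psh, so their max is psh, hence $U+C\rho$ is psh. Expanding
\begin{equation*}
dd^c|\psi| \;=\; 2\,dd^c U\;-\;dd^c\varphi_1-dd^c\varphi_2,
\end{equation*}
I rewrite this as $T^+-T^-$ with
\begin{equation*}
T^+ := 2\bigl(dd^c U + C\om\bigr), \qquad T^- := \bigl(dd^c\varphi_1 + C\om\bigr) + \bigl(dd^c\varphi_2 + C\om\bigr).
\end{equation*}
Each of $T^\pm$ is positive and closed of mass $2C$ (the $dd^c\varphi_i$ and $dd^c U$ are exact, so their masses vanish). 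Combining with $\||\psi|\|_{L^1}=\|\psi\|_{L^1}\leq\|\psi\|_{DSH}$ gives $\||\psi|\|_{DSH}\leq(1+2c)\|\psi\|_{DSH}$.

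The main technical hurdle is the quantitative use of the $dd^c$-lemma: producing global qpsh potentials for $T^\pm$ with the right pointwise lower bound on their $dd^c$. Once this is in place, the rest of the argument is essentially bookkeeping (the max inequality for qpsh functions and the mass calculation for the splitting of $dd^c|\psi|$). A minor subtlety is that the infimum in $\|\psi\|_{DSH}$ need not be attained; either one approximates it and absorbs the loss into $c$, or one uses compactness of positive closed $(1,1)$-currents of bounded mass to realize it directly.
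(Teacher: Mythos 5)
Your argument is correct, and it is essentially the standard one: the paper itself gives no proof of this proposition, quoting it directly from \cite{DS3}, and your reconstruction (normalized quasi-potentials $T^{\pm}=m\om+dd^cu_{\pm}$ via $H^{1,1}(\pk)=\Bbb{R}[\om]$, followed by the identity $|\psi|=2\max(\varphi_1,\varphi_2)-\varphi_1-\varphi_2$ and the mass computation) is exactly the Dinh--Sibony argument. The only points to watch are minor and you handle them: the masses of $T^{\pm}$ agree because they are cohomologous, the additive constant $C_0$ is harmless since the statement imposes no bound on the potentials themselves, and the infimum in the DSH norm is attained by weak compactness of positive closed currents of bounded mass.
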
 

 Now, we turn our attention to strong mixing properties of sequential holomorphic dynamical systems. 
 
 \begin{thm}\label{mix}
   Let $(\pk,\B,\f,\p)$ be a sequential holomorphic dynamical system satisfying (\ref{A}) and (\ref{B}). Then for every $\varphi \in L^p_{\mu_n}(\pk)$ and $\psi \in DSH( \pk)$ we have 
$$ |\la \p, (\varphi\circ f_{n-1}\circ \dots \circ f_0) \psi\ra - \la\mu_n, \varphi\ra \la \p,\psi\ra| \leq Cd^{-n}||\varphi||_{L^p_{\mu_n}} \ ||\psi||_{DSH}^{\p}$$ 
where $C>0$ depends only on $\f$ and $p>1.$ Moreover, for each $0\leq \alpha\leq 2$  there exists $C>0$ depending only on $\f$ such that
$$ |\la \p, (\varphi\circ f_{n-1}\circ \dots \circ f_0) \psi\ra - \la\mu_n, \varphi\ra \la \p,\psi\ra| \leq Cd^{-\frac{\alpha n}{2}}||\varphi||_{L^p_{\mu_n}} \ ||\psi||_{\mathscr{C}^{\alpha}} $$ for each $\varphi\in L^p_{\mu_{n}}$ and $\psi$ of class $\mathscr{C}^{\alpha}.$

\end{thm}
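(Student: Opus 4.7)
The plan is to convert the correlation into an $L^q$-estimate on $\mathcal{P}_n\psi$ via H\"{o}lder's inequality and then extract the factor $d^{-n}$ from how the normalized pushforward acts on positive closed $(1,1)$-currents. First I would iterate the defining adjointness $\langle \mu_j, (\varphi\circ f_j)\psi\rangle = \langle \mu_{j+1}, \varphi\cdot P_j\psi\rangle$ to rewrite
\begin{equation*}
\langle \p, (\varphi\circ F_n)\psi\rangle = \langle \mu_n, \varphi\cdot \mathcal{P}_n\psi\rangle.
\end{equation*}
Since $P_j1 \equiv 1$ forces $\langle \mu_n, \mathcal{P}_n\psi\rangle = \langle \p,\psi\rangle$, the correlation equals $\langle \mu_n, \varphi\cdot(\mathcal{P}_n\psi - \langle \p,\psi\rangle)\rangle$, which H\"{o}lder's inequality bounds by $\|\varphi\|_{L^p_{\mu_n}}\|\mathcal{P}_n\psi - \langle \p,\psi\rangle\|_{L^q_{\mu_n}}$, where $q$ is the exponent conjugate to $p$. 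This reduces both parts of the theorem to exponential decay of $\mathcal{P}_n\psi - \langle \p,\psi\rangle$ in $L^q_{\mu_n}$.

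For the DSH estimate, the key fact is $dd^c\mathcal{P}_n\psi = d^{-nk}(F_n)_*(dd^c\psi)$ at the level of currents, together with a contraction property: writing $dd^c\psi = T^+ - T^-$ with $T^{\pm}$ positive closed $(1,1)$-currents, the projection formula gives
\begin{equation*}
\|d^{-nk}(F_n)_*T^{\pm}\| = d^{-nk}\int T^{\pm}\wedge F_n^*\om^{k-1} = d^{-n}\|T^{\pm}\|,
\end{equation*}
because $F_n^*\om^{k-1}$ has cohomology class $d^{n(k-1)}[\om^{k-1}]$. Taking the infimum over such representations produces $\|\mathcal{P}_n\psi - \langle \p,\psi\rangle\|_{DSH}^{\mu_n} \leq d^{-n}\|\psi\|_{DSH}^{\p}$. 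Combining this with the embedding $\|\cdot\|_{L^q_{\mu_n}} \lesssim \|\cdot\|_{DSH}^{\mu_n}$ for $1 \leq q < \infty$, which follows from the H\"{o}lder continuity of the local potentials of $\mu_n$ (via exponential integrability of qpsh functions), completes the DSH part.

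The $\mathscr{C}^{\alpha}$ estimate for $0\leq \alpha\leq 2$ is then obtained by interpolating between the trivial bound at $\alpha=0$, namely $|\langle \p,(\varphi\circ F_n)\psi\rangle - \langle\mu_n,\varphi\rangle\langle\p,\psi\rangle| \leq 2\|\varphi\|_{L^p_{\mu_n}}\|\psi\|_{\mathscr{C}^0}$, and the case $\alpha=2$, which follows from the DSH estimate combined with the comparison $\|\psi\|_{DSH}^{\p} \lesssim \|\psi\|_{\mathscr{C}^2}$. Standard real interpolation in the H\"{o}lder scale $\mathscr{C}^{\alpha}(\pk)$, in the manner of \cite{DS3}, produces the intermediate rate $d^{-\alpha n/2}$.

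The main technical obstacle is not the mass computation but uniformity in $n$: the constant in $\|\cdot\|_{L^q_{\mu_n}} \lesssim \|\cdot\|_{DSH}^{\mu_n}$ depends on the H\"{o}lder modulus of the local potentials of $\mu_n$, which \emph{a priori} varies with $n$. To obtain a single constant $C$ that works for all $n$, I would verify that each shifted tail $\f_n$ still satisfies (\ref{A}) and (\ref{B}) with a common lower bound in (\ref{A}) and shares an upper bound for $\chi_{top}$, so that Theorem \ref{thm2:1} applied to $\f_n$ delivers H\"{o}lder exponents and seminorms bounded uniformly in $n$. This uniform control, together with the mass contraction, closes the estimate.
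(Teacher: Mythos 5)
Your proposal is correct and follows essentially the same route as the paper: reduce via adjointness and H\"older's inequality to an $L^q_{\mu_n}$-bound on $\mathcal{P}_n\psi$, obtain the factor $d^{-n}$ from the mass contraction of the normalized pushforward on the positive closed currents $T^{\pm}$, pass from the DSH-norm to $L^q_{\mu_n}$ via exponential integrability tied to the (uniform in $n$) H\"older continuity of the potentials of $\mu_n$, and interpolate between $\mathscr{C}^0$ and $\mathscr{C}^2$ for the H\"older case --- exactly the content of Lemmas \ref{compact} and \ref{hholder} in the paper. (Only a cosmetic slip: the identity $\la\mu_n,\mathcal{P}_n\psi\ra=\la\p,\psi\ra$ comes from the invariance (\ref{invariance}) encoded in the adjointness, i.e.\ $T_j1=1$, rather than from $P_j1\equiv1$.)
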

We need several preliminary lemmas to prove Theorem \ref{mix}. 
In what follows, $C_{\f}$ denotes a constant which depends only on $\f.$
\begin{lem}\label{dsh}
 There exists $C_{\f}>0$ such that 
$$\|\psi\|_{DSH}^{\mu_n}\leq C_{\f}\ \|\psi\|_{DSH}$$
 for every $n\in \Bbb{N}$ and $\psi\in DSH(\pk).$
\end{lem}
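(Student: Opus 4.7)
Since $\|\psi\|_{DSH}^{\mu_n}$ and $\|\psi\|_{DSH}$ agree in their $\inf\|T^{\pm}\|$ term, the lemma is equivalent to establishing the uniform bound $|\la\mu_n,\psi\ra|\leq C_{\f}\|\psi\|_{DSH}$ for all $n\in\Bbb{N}$ and $\psi\in DSH(\pk)$. I would first apply Proposition \ref{pro} to decompose $\psi=\varphi_1-\varphi_2$ with $\varphi_i$ negative qpsh and $dd^c\varphi_i\geq -c\|\psi\|_{DSH}\om$; after extracting additive constants of size $\lesssim\|\psi\|_{DSH}$ to normalize $\sup\varphi_i=0$ (using the $L^1$-compactness of normalized $\om$-qpsh functions) and rescaling by $c\|\psi\|_{DSH}$, the task reduces to establishing a uniform bound $\sup_{n,\,\varphi\in\mathcal{K}}|\la\mu_n,\varphi\ra|\leq C_\f$ over the compact class $\mathcal{K}=\{\varphi\leq 0:dd^c\varphi\geq -\om,\ \sup\varphi=0\}$.

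For this uniform bound I would employ the telescoping identity $\mu_n-\om^k = dd^c S_n$, where $S_n = g_{\f_n}\sum_{p=0}^{k-1}T_{\f_n}^p\wedge\om^{k-p-1}$, giving $\la\mu_n,\varphi\ra = \la\om^k,\varphi\ra + \la S_n, dd^c\varphi\ra$. The first term is controlled by the $L^1(\om^k)$-diameter of $\mathcal{K}$. For the second, I would write $dd^c\varphi = \nu-\om$ with $\nu\geq 0$ of total mass $1$ and use the series representation $g_{\f_n}=\sum_{i\geq 0}d^{-i}u_{n+i}\circ F_i^{(n)}$ from the proof of Theorem \ref{thm2:1} together with the $\mathscr{C}^1$-bounds $\|u_j\|_{\mathscr{C}^1}\leq C\,\mathrm{dist}(f_j,\M)^{-q}$ from \cite{dTh1} and hypothesis (\ref{B}), bounding $|\la S_n, dd^c\varphi\ra|$ by a convergent sum of terms of the form $d^{-i}\mathrm{dist}(f_{n+i},\M)^{-q}$ whose total depends only on $\f$.

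The main obstacle is precisely the uniformity in $n$. Although each $\mu_n$ has Hölder continuous potentials by Theorem \ref{thm2:1}, the $L^\infty$-norm $\|g_{\f_n}\|_\infty$ of the potential of the Green current associated to the shifted sequence $\f_n$ may grow with $n$ under only hypotheses (\ref{A}) and (\ref{B}), so a naive $L^\infty$-based Chern--Levine--Nirenberg estimate applied to $\mu_n=T_{\f_n}^k$ does not deliver an $n$-independent constant. To circumvent this I would exploit the geometric decay factor $d^{-i}$ in the series for $g_{\f_n}$ and, in parallel, pivot to the reference measure $\p=\mu_\f$ via the invariance $(F_n)_*\p=\mu_n$ iterated from (\ref{invariance}), writing $\la\mu_n,\varphi\ra=\la\p,\varphi\circ F_n\ra$; the regularity of $\p$ is governed by the full sequence $\f$ and is therefore genuinely $n$-uniform, while the cohomological identity $F_n^*\om = d^n\om + dd^c(d^n g_n^{(0)})$ with $g_n^{(0)}\to g_\f$ uniformly (proof of Theorem \ref{thm2:1}) supplies the necessary comparison between the $\om$-qpsh classes arising under pullback. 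Combining this pivot with the $L^1$-compactness of $\mathcal{K}$ and the uniform PLB of $\p$ yields the required uniform constant $C_\f$, and hence the claimed equivalence of the two DSH norms.
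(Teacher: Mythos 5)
Your reduction of the lemma to the uniform bound $|\la\mu_n,\psi\ra|\leq C_{\f}\|\psi\|_{DSH}$ is correct, and your diagnosis of the real difficulty (uniformity in $n$) is exactly right: with only (\ref{A}) and (\ref{B}), the series $g_{\f_n}=\sum_{i\geq0}d^{-i}u_{n+i}\circ F^{(n)}_i$ together with $\|u_{n+i}\|_{\mathscr{C}^1}\leq C\,dist(f_{n+i},\M)^{-q}$ only gives $\|g_{\f_n}\|_{\infty}\lesssim\sum_i d^{-i}e^{\epsilon q(n+i)}\approx e^{\epsilon q n}$, so the telescoping/Chern--Levine--Nirenberg route degrades with $n$, as you say. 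The gap is in the step you offer as the cure. The ``pivot'' $\la\mu_n,\varphi\ra=\la\p,\varphi\circ F_n\ra$ is just the identity $(F_n)_*\p=\mu_n$ restated, so the $n$-uniform regularity of $\p$ can only help if $\varphi\circ F_n$ is placed in a fixed normalized quasi-psh family. But for $\varphi\in\mathcal{K}$ one has $dd^c(\varphi\circ F_n)\geq -F_n^*\om$, and the comparison you invoke, $F_n^*\om=d^n(\om+dd^cg_n)$ with $g_n\to g_{\f}$ uniformly, only shows that $d^{-n}\varphi\circ F_n+g_n$ lies in a fixed compact family of $\om$-psh functions bounded above. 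Uniform integrability of that family against $\p$ then yields $\la\p,\varphi\circ F_n\ra\geq -C_{\f}\,d^n$, i.e.\ $|\la\mu_n,\varphi\ra|\leq C_{\f}\,d^n$ --- off by precisely the factor $d^n$ you need to remove, and nothing in your sketch recovers it. Recovering it through decay of correlations would be circular, since Lemma \ref{compact} and Theorem \ref{mix} themselves rest on the present lemma.

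The paper closes this gap with an input you do not use: the measures $\mu_n$ have H\"older continuous \emph{super-potentials} (cited from \cite{Bay}), and the quantitative estimate of \cite[Lemma 3.3]{DN12} for such measures gives directly
$|\la\mu_n,\psi\ra|\leq C_{\f}\max\bigl(\|\psi\|_{L^1(\pk)},\,c^{1-\alpha}\|\psi\|_{DSH}^{1-\alpha}\|\psi\|_{L^1}^{\alpha}\bigr)$
after the decomposition of Proposition \ref{pro}; a short case analysis on whether $\|\psi\|_{L^1}\leq c\|\psi\|_{DSH}$ then finishes. In other words, the needed $n$-uniform quantitative regularity is attached to the measures $\mu_n$ themselves via their super-potentials, not to $\p$ and not merely to H\"older continuity of the potentials $g_{\f_n}$ (whose constants, as you computed, are not uniform). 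To make your approach work you would have to prove such a uniform regularity statement for the $\mu_n$ --- at which point you are essentially reconstructing the cited ingredient rather than bypassing it.
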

\begin{proof}
Let $\psi \in DSH(\pk)$ then by Proposition \ref{pro} there exists qpsh functions $\phi_i$ such that $\psi=\phi_1-\phi_2$ and $dd^c\phi_i \geq -c\|\psi\|_{DSH}\omega_{FS}$ where $c>0$ is independent of $\psi$ and $\phi_i$. Since the measures $\mu_n$ have H\"{o}lder continuous super potentials (cf. \cite[Theorem1.1]{Bay}) with H\"{o}lder exponent $0<\alpha\leq 1,$ by \cite[ Lemma 3.3]{DN12} we obtain 
$$|\la \mu_n,\psi\ra|\leq C_{\f}\max(\|\psi\|_{L^1(\pk)},c^{1-\alpha}\|\psi\|_{DSH}^{1-\alpha}\|\psi\|_{L^1}^{\alpha})$$
 If $\|\psi\|_{L^1(\pk)}\geq c^{1-\alpha}\|\psi\|_{DSH}^{1-\alpha}\|\psi\|_{L^1}^{\alpha}$ we are done. Otherwise $\|\psi\|_{L^1(\pk)}<c\|\psi\|_{DSH}$ and this implies that 
$$|\la\mu_n,\psi\ra|\leq cC_{\f}\|\psi\|_{DSH}.$$
Thus, the assertion follows from Proposition \ref{pro}.
\end{proof}
\begin{rem}\label{rem}
By using a similar argument and using Lemma \ref{dsh} one can also show that there exists a constant $C_{\f}>0$  such that 
$$\|\psi\|_{DSH}\leq C_{\f}\|\psi\|_{DSH}^{\mu_n}.$$
for every $n\geq 0$ (cf. \cite[Proposition 8]{dTh1}). 
\end{rem}
The following lemma is essentially due to \cite{DNS}, however, we need to make some modifications to adapt it in our setting.
\begin{lem}\label{compact}
Let $1\leq q<\infty$ and $\psi\in DSH(\pk)$ satisfying $$\la \p,\psi\ra=0$$ then
\begin{equation}\label{strong}
\| \mathcal{P}_{n}(\psi)\|_{L^q_{\mu_j}}\leq qC_{\f}\ d^{-n} \|\psi\|_{DSH}^{\p},\ \forall j,n\geq 1
\end{equation}
where $C_{\f}>0$ depends only on the sequence $\f=(f_j)_{j\geq0}$.
\end{lem}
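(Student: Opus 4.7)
The plan is to first establish a contraction estimate for the DSH-norm adapted to $\mu_n,$ and then to pass to the $L^q_{\mu_j}$ estimate via exponential integrability of DSH functions against measures with H\"{o}lder continuous super-potentials. Iterating the definition of the averaging operators $P_j$ yields the closed-form expression $\mathcal{P}_n\psi = d^{-nk}(F_n)_*\psi,$ so that $dd^c\mathcal{P}_n\psi = d^{-nk}(F_n)_*(dd^c\psi).$ Given any representation $dd^c\psi = S^+ - S^-$ with positive closed $(1,1)$-currents $S^{\pm}$ of equal mass, set $T^{\pm} := d^{-nk}(F_n)_*S^{\pm};$ since $F_n$ has algebraic degree $d^n$ and topological degree $d^{nk},$ its push-forward acts on $H^{1,1}(\pk,\mathbb{R})$ by multiplication by $d^{n(k-1)},$ giving the contraction $\|T^{\pm}\| = d^{-n}\|S^{\pm}\|.$ On the other hand, the adjointness of each $P_j$ together with the invariance $(f_{j-1})_*\mu_{j-1}=\mu_j$ from (\ref{invariance}) gives $\la\mu_n, \mathcal{P}_n\psi\ra = \la\p,\psi\ra = 0.$ Taking the infimum over representations then yields $\|\mathcal{P}_n\psi\|_{DSH}^{\mu_n} \leq d^{-n}\|\psi\|_{DSH}^{\p}.$

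Next, I would invoke Lemma \ref{dsh} and Remark \ref{rem} to obtain uniform equivalence of the norms $\|\cdot\|_{DSH}^{\mu_j}$ across all $j,$ upgrading the contraction to $\|\mathcal{P}_n\psi\|_{DSH}^{\mu_j} \leq C_{\f}\, d^{-n}\|\psi\|_{DSH}^{\p}$ for arbitrary $j\geq 0.$ To pass from this DSH bound to the desired $L^q_{\mu_j}$ bound, I would split $\mathcal{P}_n\psi$ into its $\mu_j$-mean (which is already controlled by the DSH-norm) and the zero-mean remainder; using exponential integrability of qpsh functions against measures with H\"{o}lder continuous super-potentials, namely $\int e^{\alpha |g|/\|g\|_{DSH}^{\mu_j}}\, d\mu_j \leq C$ for zero-mean DSH $g,$ Stirling's formula converts this tail bound into the linear-in-$q$ estimate $\|g\|_{L^q_{\mu_j}} \lesssim q\, \|g\|_{DSH}^{\mu_j}.$ Combined with the contraction of the previous paragraph, this yields the claimed bound $qC_{\f}\, d^{-n}\|\psi\|_{DSH}^{\p}.$

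The main obstacle is ensuring that all implicit constants are uniform in both $j$ and $n,$ which reduces to a uniform upper bound on the H\"{o}lder constants of the super-potentials of the measures $\mu_j.$ Such uniformity follows from applying Theorem \ref{thm2:1} to each shifted sequence $\f_j$ and observing that hypotheses (\ref{A}) and (\ref{B}) are preserved under shifts, so that the H\"{o}lder exponents in the Green function estimate, and hence the decay rates in the exponential integrability of qpsh functions, can be chosen uniformly in $j.$ Once this uniformity is in place, the linear-in-$q$ dependence absorbs into the final constant to deliver the quoted bound.
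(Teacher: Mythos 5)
Your proof is correct and follows essentially the same route as the paper's: the mass contraction $\|\mathcal{P}_n\psi\|_{DSH}^{\mu_n}\leq d^{-n}\|\psi\|_{DSH}^{\p}$ via the cohomological action of the push-forward (the paper iterates the one-step estimate for each $f_j$, you do it in one shot via $\mathcal{P}_n\psi=d^{-nk}(F_n)_*\psi$, a cosmetic difference), then the uniform equivalence of the norms $\|\cdot\|_{DSH}^{\mu_j}$ from Lemma \ref{dsh} and Remark \ref{rem}, and finally the Dinh--Nguyen--Sibony exponential estimates against the measures $\mu_j$ (uniform in $j$ by the H\"{o}lder regularity of their super-potentials) combined with $x^q/q!\leq e^x$ to get the linear-in-$q$ bound. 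The uniformity in $j$ that you justify via shifted sequences is exactly the point the paper handles by citing Theorem \ref{basic} and \cite{Bay}, so no gap.
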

\begin{proof}
Note that $f_{n-1}^*\mu_{n}=d^k\mu_{n-1}$ for $n\geq1$. This implies that 
$$\la\mu_{n},P_{n-1}(P_{n-2}\circ \dots \circ P_0\psi)\ra=0.$$ Moreover, 
$$\|P_{n-1}(P_{n-2}\circ \dots \circ P_0\psi)\|_{DSH}^{\mu_{n}}\leq d^{-1}\|P_{n-2}\circ \dots \circ P_0\psi\|_{DSH}^{\mu_{n-1}}$$ 
Indeed, we may write $$dd^c(P_{n-2}\circ \dots \circ P_0\psi)=R_{n-2}^+-R_{n-2}^-$$ where $R_{n-2}^{\pm}$ are some positive closed $(1,1)$ currents.   
Then
\begin{equation}\label{normm}
\|P_{n-1}(P_{n-2}\circ \dots \circ P_0\psi)\|^{\mu_{n}}_{DSH}\leq\|d^{-k}(f_{n-1})_*(R_{n-2}^{\pm})\|= d^{-1}\|R_{n-2}^{\pm}\|
\end{equation} where the last equality follows from cohomological computation. Now by Proposition \ref{pro}, Remark \ref{rem}, Lemma \ref{dsh} and (\ref{normm}) we obtain 
\begin{eqnarray*}
\| |P_{n-1}\circ \dots \circ P_1\circ P_0(\psi)|\|_{DSH} & \leq & C \| P_{n-1}\circ \dots \circ P_1\circ P_0(\psi)\|_{DSH} \\
& \leq & C_1  \| P_{n-1}\circ \dots \circ P_1\circ P_0(\psi)\|_{DSH}^{\mu_{n}} \\
& \leq & C_2 d^{-n}\|\psi\|_{DSH}^{\p}\\
& \leq & C_3d^{-n} \|\psi\|_{DSH}
\end{eqnarray*}
where $C_3>0$ depends on $\f$ but does not depend on $n$ nor $\psi.$ Thus, by above estimate and Lemma \ref{dsh} it is enough to prove the case $\|\psi\|_{DSH}>0$. Since $$\frac{d^n}{\|\psi\|_{DSH}}|P_{n-1}\circ \dots \circ P_1\circ P_0(\psi)|$$ is a bounded sequence in $DSH(\pk),$ by Theorem \ref{basic} and \cite[Corollary 1.2]{DNS} (see also \cite[Proposition 4.4]{DN12}) there exists $\gamma>0$ and $C_{\f}>0$ independent of $\psi$ such that
$$\la\mu_{j}, \exp(\gamma \frac{d^n}{\|\psi\|_{DSH}}|P_{n-1}\circ \dots \circ P_1\circ P_0(\psi)|)\ra \leq C_{\f}$$
for all $j,n\geq 1.$
Finally, by using the inequality $\frac{x^q}{q!}\leq e^x$ for $x\geq 0$ we conclude that
$$ \| P_{n-1}\circ \dots \circ P_1\circ P_0(\psi)\|_{L^q_{\mu_{j}}}\leq q^qC_{\f,\gamma}d^{-n}\|\psi\|_{DSH}. $$ 
\end{proof}

In the autonomous case, as a consequence of interpolation theory between the Banach spaces $\mathscr{C}^0$ and $\mathscr{C}^2$ \cite{Triebel}; it was observed in \cite{DNS} that a holomorphic map $f\in\Hd$ posses strong mixing property for $\alpha$-H\"{o}lder continuous functions with $0<\alpha\leq 1$ (see \cite[Proposition 3.5]{DNS}). Adapting their argument to our setting, we obtain the succeeding lemma. We omit the proof as it is similar to the one given in Lemma \ref{compact} and to that of \cite[Proposition 3.5]{DNS}.
\begin{lem}\label{hholder}
Let $1\leq q<\infty$ and $0<\alpha\leq1$ be fixed. If $\psi:\pk\to \Bbb{R}$ be a $\alpha$-H\"older continuous function satisfying $\la\p,\psi\ra=0$ then there exists a constant $C_{\f,\alpha}>0$ independent of $\psi$ such that
$$ \| \mathcal{P}_n(\psi)\|_{L^q(\mu_{n})}\leq C_{\f,\alpha}q^{\frac{\alpha}{2}} d^{-\frac{n\alpha}{2}} \|\psi\|_{\mathscr{C}^{\alpha}}
$$
for every $n\geq1.$
\end{lem}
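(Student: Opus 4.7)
The plan is to adapt the interpolation strategy of \cite[Proposition 3.5]{DNS} to the non-autonomous setting, combining it with the DSH estimate already proved in Lemma \ref{compact}. Morally $\mathscr{C}^\alpha$ sits between $\mathscr{C}^0$ and $\mathscr{C}^2$: on $\mathscr{C}^0$ the operator $\mathcal{P}_n$ is a trivial contraction with no decay, while on $\mathscr{C}^2$ Lemma \ref{compact} (together with $\|\cdot\|_{DSH}\lesssim \|\cdot\|_{\mathscr{C}^2}$) delivers the full exponential decay $d^{-n}$ with a linear prefactor in $q$. Interpolating at an appropriate scale $\epsilon$ should produce the desired rate $d^{-n\alpha/2}$, and the linear dependence on $q$ coming from Lemma \ref{compact} will simultaneously yield the factor $q^{\alpha/2}$.

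Concretely, I would fix a standard smoothing $\psi_\epsilon$ of $\psi$ on $\pk$ (for instance convolution with a bump in local charts, or globally the Fubini--Study heat semigroup $e^{\epsilon^2\Delta}\psi$) satisfying the classical interpolation estimates
\begin{equation*}
\|\psi-\psi_\epsilon\|_{\mathscr{C}^0}\lesssim \epsilon^{\alpha}\|\psi\|_{\mathscr{C}^\alpha}\quad\text{and}\quad \|\psi_\epsilon\|_{\mathscr{C}^2}\lesssim \epsilon^{\alpha-2}\|\psi\|_{\mathscr{C}^\alpha}.
\end{equation*}
Setting $\widetilde\psi_\epsilon:=\psi_\epsilon-\la\p,\psi_\epsilon\ra$ and $r_\epsilon:=(\psi-\psi_\epsilon)-\la\p,\psi-\psi_\epsilon\ra$, both pieces are $\p$-centered, and since $\mathcal{P}_n 1=1$ and $\la\p,\psi\ra=0$ we get $\mathcal{P}_n\psi=\mathcal{P}_n\widetilde\psi_\epsilon+\mathcal{P}_n r_\epsilon$. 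Lemma \ref{compact} applied to the DSH function $\widetilde\psi_\epsilon$ gives
\begin{equation*}
\|\mathcal{P}_n\widetilde\psi_\epsilon\|_{L^q_{\mu_n}}\lesssim q\,d^{-n}\|\widetilde\psi_\epsilon\|^{\p}_{DSH}\lesssim q\,d^{-n}\epsilon^{\alpha-2}\|\psi\|_{\mathscr{C}^\alpha},
\end{equation*}
while the remainder is controlled by the $L^\infty$ contraction property of $\mathcal{P}_n$ (which follows from $P_j 1=1$ and $|P_j\varphi|\le P_j|\varphi|$), yielding
\begin{equation*}
\|\mathcal{P}_n r_\epsilon\|_{L^q_{\mu_n}}\leq \|\mathcal{P}_n r_\epsilon\|_{\infty}\leq 2\|\psi-\psi_\epsilon\|_{\mathscr{C}^0}\lesssim \epsilon^\alpha\|\psi\|_{\mathscr{C}^\alpha}.
\end{equation*}

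Adding the two contributions gives $\|\mathcal{P}_n\psi\|_{L^q_{\mu_n}}\lesssim (q\,d^{-n}\epsilon^{\alpha-2}+\epsilon^\alpha)\|\psi\|_{\mathscr{C}^\alpha}$, and the choice $\epsilon=q^{1/2}d^{-n/2}$ balances the two terms, each becoming of size $q^{\alpha/2}d^{-n\alpha/2}\|\psi\|_{\mathscr{C}^\alpha}$, which is the claimed estimate. The main technical point to verify is the availability of a regularization on $\pk$ whose interpolation constants depend only on $\alpha$; this is classical Littlewood--Paley or heat-kernel material and should not cause any real obstacle. A subtler bookkeeping point, easy to overlook, is that Lemma \ref{compact} requires the DSH function to be $\p$-centered rather than $\mu_n$-centered, which is why one must subtract $\la\p,\psi_\epsilon\ra$ (and not $\la\mu_n,\psi_\epsilon\ra$) before applying it.
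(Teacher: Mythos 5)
Your proof is correct and is essentially the argument the paper intends: the paper omits the proof, pointing to the interpolation between $\mathscr{C}^0$ and $\mathscr{C}^2$ from \cite[Proposition 3.5]{DNS} combined with Lemma \ref{compact}, and your explicit regularization $\psi_\epsilon$ with the choice $\epsilon=q^{1/2}d^{-n/2}$ is just a hands-on implementation of that same interpolation, with the exponent arithmetic ($q\,d^{-n}\epsilon^{\alpha-2}$ versus $\epsilon^{\alpha}$ both becoming $q^{\alpha/2}d^{-n\alpha/2}$) checking out, and with the $\p$-centering handled correctly so that Lemma \ref{compact} applies.
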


\begin{proof}[Proof of Theorem \ref{mix}]
If $\psi$ is constant then the assertion follows from the invariance properties $$ (f_j)_*\mu_{j}=\mu_{j+1}.$$ Thus, replacing $\psi$ by  $\psi-\la\p,\psi\ra$ we may assume that $\la\p,\psi\ra=0.$ Then by H\"older's inequality and applying Lemma \ref{compact} with $q=\frac{p}{p-1}$ we obtain
\begin{eqnarray*}
|\la \p, (\varphi\circ f_{n-1}\circ \dots \circ f_0) \psi\ra| & = & d^{-kn}|\la F_{n}^*\mu_{n}, (\varphi\circ f_{n-1}\circ \dots \circ f_0) \psi\ra| \\
& \leq & |\la \mu_{n},\varphi P_{n-1}\circ \dots \circ P_1\circ P_0(\psi)\ra| \\
& \leq & \|\varphi\|_{L^p(\mu_{n})} \| P_{n-1}\circ \dots \circ P_1\circ P_0(\psi)\|_{L^q(\mu_{n})}\\
& \leq & \frac{p}{p-1}C_{\f} d^{-n}\|\varphi\|_{L^p(\mu_{n})} \|\psi\|_{DSH}^{\p}
\end{eqnarray*}
for some $c>0$ independent of $\psi$ and for all $n\geq 0.$ 

Finally, repeating the above argument by using Lemma \ref{hholder} the second assertion follows.
\end{proof}
The following result follows from Theorem \ref{mix}; its proof is based on induction and H\"{o}lder's inequality. As the proof is similar to that of \cite[Theorem 3.4]{DNS} we omit it.
\begin{cor}\label{dshcompact}
Let $\f=(f_j)_{j\in\Bbb{N}}$ and $\mu_j$ be as in Theorem \ref{mix} and $r\geq 1$ be an integer. Further, we let $\psi_j$ be dsh functions satisfying $\sup_{0\leq j\leq r}\|\psi_j\|_{DSH}<\infty.$ Then
$$|\la \p,\psi_0(\psi_1\circ F_{n_1})\cdots (\psi_r\circ F_{n_r})\ra-\prod_{j=0}^r\la\mu_{n_j},\psi_j\ra|\leq C_{\f}\ d^{-n}\prod_{j=0}^r\|\psi_j\|_{DSH}$$
where $0=n_0\leq n_1\leq \dots \leq n_r$ and $n:=\min_{0\leq j\leq r}(n_{j+1}-n_j).$
\end{cor}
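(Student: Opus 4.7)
The plan is to proceed by induction on $r$, peeling off the innermost composition $F_{n_1}$ at each step and exploiting the fact that the sequence $\f_{n_1}$ still satisfies the hypotheses of Theorem \ref{mix} with invariant measures $\mu_{n_1+m} = \mu_{n_{1+m}}$ along its iterates.

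For the base case $r=1$, I would apply Theorem \ref{mix} directly with $\varphi=\psi_1$ and $\psi=\psi_0$. The inequality $\|\psi_1\|_{L^p_{\mu_{n_1}}}\lesssim \|\psi_1\|_{DSH}$ with a constant uniform in $n_1$ can be extracted from (the methods used in) Lemma \ref{compact}, since DSH functions are exponentially integrable against every $\mu_n$ with a bound depending only on $\|\psi_1\|_{DSH}$ and $\f$.

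For the inductive step, suppose the claim is proved for $r-1$. Writing $G_j := f_{n_j-1}\circ\cdots\circ f_{n_1}$ for $j\geq 2$, we have the identity $\psi_j\circ F_{n_j} = (\psi_j\circ G_j)\circ F_{n_1}$, so setting
\[
\Phi := \psi_1\cdot\prod_{j=2}^{r}(\psi_j\circ G_j),
\]
the quantity we wish to estimate equals $\langle\p,\psi_0\cdot(\Phi\circ F_{n_1})\rangle$. I would then apply Theorem \ref{mix} with $\varphi=\Phi$ and $\psi=\psi_0$, which gives
\[
\bigl|\langle\p,\psi_0\cdot(\Phi\circ F_{n_1})\rangle-\langle\mu_{n_1},\Phi\rangle\langle\p,\psi_0\rangle\bigr|\leq C_\f d^{-n_1}\|\Phi\|_{L^p_{\mu_{n_1}}}\|\psi_0\|_{DSH}^{\p},
\]
and bound $\|\Phi\|_{L^p_{\mu_{n_1}}}$ via H\"{o}lder's inequality together with the change of variables $(G_j)_*\mu_{n_1}=\mu_{n_j}$ (an iteration of (\ref{invariance})) and the uniform DSH$\hookrightarrow L^{rp}_{\mu_{n_j}}$ embedding mentioned above. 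Next, observing that $\Phi$ is exactly the product that arises when one applies the statement with $r-1$ composition indices $0\leq n_2-n_1\leq\cdots\leq n_r-n_1$ to the shifted sequence $\f_{n_1}$ and functions $\psi_1,\ldots,\psi_r$, the induction hypothesis yields
\[
\Bigl|\langle\mu_{n_1},\Phi\rangle-\prod_{j=1}^{r}\langle\mu_{n_j},\psi_j\rangle\Bigr|\leq C_\f d^{-n}\prod_{j=1}^{r}\|\psi_j\|_{DSH}.
\]
Multiplying by $|\langle\p,\psi_0\rangle|\lesssim\|\psi_0\|_{DSH}$ and using $n_1\geq n$, I would combine this with the previous inequality via the triangle inequality to conclude the desired bound at level $r$.

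The main technical obstacle is ensuring that all constants remain uniform: (i) the embedding DSH$\hookrightarrow L^{q}_{\mu_n}$ must be uniform in $n$ with constant depending only on $q$ and $\f$, which I would justify by revisiting the exponential integrability estimate from the proof of Lemma \ref{compact}; and (ii) the constants $C_\f$ coming from repeated application of Theorem \ref{mix} to the shifted sequences $\f_{n_1}$ must be controlled uniformly in the shift, which is automatic because hypotheses (\ref{A}) and (\ref{B}) are preserved under shifts and the quantitative constants in Theorem \ref{mix} were derived from these hypotheses. With these two points in hand, the induction closes cleanly.
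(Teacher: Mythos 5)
Your argument is correct and is essentially the proof the paper has in mind (and omits): induction on $r$ combined with H\"older's inequality and the decay estimate of Theorem \ref{mix}, exactly as in the cited proof of \cite[Theorem 3.4]{DNS}, adapted to the non-autonomous setting by peeling off $F_{n_1}$ and invoking the statement for the shifted sequence $\f_{n_1}$ with measures $\mu_{n_1+m}$. Your two uniformity caveats are the right ones to flag, and both are resolved as you indicate, since the measures attached to any shift $\f_{n_1}$ form a subfamily of $\{\mu_j\}_{j\geq0}$, over which the exponential estimates behind Lemma \ref{dsh} and Lemma \ref{compact} are already uniform.
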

Note that we may also obtain an analogue statement of Corollary \ref{dshcompact} for H\"{o}lder continuous functions. Next, we obtain a strong law of large numbers (SLLN) for dsh and H\"{o}lder continuous observables. This result will be used to establish ASIP (Theorem \ref{th1}).

\begin{thm}\label{SLLN}
Let $\{\phi_n\}_{n\in \Bbb{N}}$ be a sequence of dsh (respectively, H\"{o}lder continuous functions with exponent $0<\alpha\leq 1$) such that $\sup_{n\in \Bbb{N}}\|\phi_n\|_{DSH}<\infty$ (respectively, $\sup_{n\in \Bbb{N}}\|\phi_n\|_{\mathscr{C}^{\alpha}}<\infty$) and $\la\mu_n,\phi_n\ra=0$ for $n\geq0.$ 
Then for each integer $r\geq 1$ and $\delta>0$ we have 
$$\lim_{n\to \infty}\frac{1}{\sqrt{n}(\log n)^{2+\delta}}\sum_{j=0}^{n-1}\big(\phi^r_j\circ F_j-\la \phi^r_j,\mu_j\ra\big)=0\ \p\text{-a.s.}$$
\end{thm}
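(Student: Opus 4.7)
The plan is to exploit the exponential decay of correlations supplied by Corollary \ref{dshcompact} (dsh case) or Lemma \ref{hholder} (H\"older case), convert it into a uniform variance bound on block sums, and then invoke a Gal--Koksma / Menshov--Rademacher type maximal inequality to upgrade the $L^2$ estimate to an almost sure rate.

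First I would set $\Psi_j:=\phi_j^r-\la\mu_j,\phi_j^r\ra$ and $S_n:=\sum_{j=0}^{n-1}\Psi_j\circ F_j$, and verify that $\{\Psi_j\}$ is uniformly bounded in the appropriate function class. In the H\"older case this is immediate: compactness of $\pk$ and the uniform $\mathscr{C}^{\alpha}$-bound on $\phi_j$ force a uniform sup-norm bound, hence a uniform $\mathscr{C}^{\alpha}$-bound on $\phi_j^r$. In the dsh case powers of dsh functions need not be dsh, so one first truncates $\phi_j$ at a large level (truncation preserves dsh regularity with controlled norm via Proposition \ref{pro}) and then controls $\|\phi_j^r\|_{DSH}$ using $dd^c\phi_j^r=r(r-1)\phi_j^{r-2}\,d\phi_j\wedge d^c\phi_j+r\phi_j^{r-1}dd^c\phi_j$.

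Next, applying Corollary \ref{dshcompact} (resp.\ Lemma \ref{hholder}) to the pair $(\Psi_i,\Psi_j)$ and using the invariance $(F_i)_*\p=\mu_i$ to transfer the correlation to the shifted system $\f_i$ acting on $\mu_i$, one obtains the two-point estimate
$$\bigl|\E_{\p}[(\Psi_i\circ F_i)(\Psi_j\circ F_j)]\bigr|\le Cd^{-|j-i|},\qquad i,j\ge 0,$$
with $C$ depending only on $\f$, $r$, and the uniform norm of $\{\phi_j\}$. Summing this geometric series over $N\le i,j\le N+n-1$ produces a block variance bound $\E_{\p}[(S_{N+n}-S_N)^2]\le C' n$ uniformly in $N\ge 0$.

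Finally, I would feed this $O(n)$ increment-variance estimate into the Gal--Koksma maximal inequality, which guarantees that any sequence of partial sums with such block bounds satisfies $S_n=o\bigl(\sqrt{n}(\log n)^{3/2+\eta}\bigr)$ almost surely for every $\eta>0$; in particular the stated rate $o\bigl(\sqrt{n}(\log n)^{2+\delta}\bigr)$ follows. The main obstacle is the regularity issue with $\phi_j^r$ in the dsh setting; once the truncation and $dd^c$ computation above are carried out carefully, the rest of the argument is a routine application of the decay-of-correlations machinery already built in the preceding sections.
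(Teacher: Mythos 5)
Your overall mechanism is the same as the paper's: center the observables, get a uniform bound on the variances, get an exponential two-point correlation bound $|Cov|\lesssim d^{-\ell}$ in the time gap from the decay-of-correlations results, sum it to an $O(n)$ block-variance estimate uniform in the starting index, and invoke the Gal--Koksma theorem of Philipp--Stout (the paper's proof defines $X_j=\phi_j^r\circ F_j-\la\mu_j,\phi_j^r\ra$, checks $\E[X_j^2]=O(1)$ and $|Cov(X_j,X_{j+\ell})|\lesssim d^{-\ell}$ via Corollary \ref{dshcompact}, and cites \cite[Theorem A1]{PS75}); your log exponent $3/2+\eta$ is indeed stronger than the stated $2+\delta$, and your H\"older case is fine as written.

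The genuine gap is in your dsh case. Truncating $\phi_j$ changes the observable: once you replace $\phi_j$ by a truncation $\phi_{j,M}$, the sums you control are $\sum_j\phi_{j,M}^r\circ F_j$, and you never bound the discrepancy $\sum_{j<n}(\phi_j^r-\phi_{j,M}^r)\circ F_j$ almost surely at the scale $o(\sqrt n(\log n)^{2+\delta})$. Since dsh functions are in general unbounded, this error is not small in sup norm; an $L^1$ or $L^2$ estimate for it (via the uniform exponential integrability with respect to the $\mu_j$) does not by itself give an a.s. statement, and if you let the truncation level grow with $j$ to force smallness, the DSH norms of $\phi_{j,M}^r$ grow with it and degrade the correlation constants you rely on -- none of this is addressed. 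In addition, the displayed identity $dd^c\phi^r=r(r-1)\phi^{r-2}d\phi\wedge d^c\phi+r\phi^{r-1}dd^c\phi$ does not by itself produce a decomposition into closed positive currents ($d\phi\wedge d^c\phi$ is positive but not closed, and $\phi^{r-2}$, $\phi^{r-1}$ change sign), so the claimed control of $\|\phi_{j,M}^r\|_{DSH}$ needs the quantitative fact that products of bounded dsh functions are dsh, not just this formula. The paper sidesteps the entire issue: it never needs $\phi_j^r$ to be dsh, because the correlation $\int_{\pk}\phi_j^r\circ F_j\cdot\phi_{j+\ell}^r\circ F_{j+\ell}\,d\p$ is estimated directly by the multi-correlation bound of Corollary \ref{dshcompact} applied to the $2r$-fold product of the dsh functions $\phi_j$ and $\phi_{j+\ell}$ themselves (centering only by the means $m_j=\la\mu_j,\phi_j^r\ra$, which need not vanish), yielding $|Cov(X_j,X_{j+\ell})|\lesssim d^{-\ell}$ with constants depending only on $r$ and $\sup_j\|\phi_j\|_{DSH}$. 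Replacing your truncation step by this direct use of Corollary \ref{dshcompact} closes the gap and makes your argument coincide with the paper's.
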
 

\begin{proof}
We prove the theorem for dsh functions as H\"{o}lder continuous case is similar. Letting
$$X_j:=\phi^r_j\circ F_j-\la \mu_j,\phi^r_j\ra\ \text{for}\ j\geq 0$$ by (\ref{invariance}) we see that $\E[X_j]:=\int_{\pk}X_jd\p=0$.
Note that by Lemma \ref{dsh} we have $\|\phi_j\|_{L^p_{\mu_j}}\lesssim \|\phi_j\|_{DSH}$ for $p\geq 1$ and we infer that
$$Var[X_j]:=\E[X_j^2]-\E[X_j]^2=\E[X^2_j]=O(1).$$ Recall that the covariance of $X_j$ and $X_{\ell}$ is given by
$$Cov(X_j,X_{\ell}):=\E[X_jX_{\ell}]-\E[X_j]\E[X_{\ell}]=\E[X_jX_{\ell}].$$
Moreover, denoting $m_j:=\la\mu_j,\phi_j\ra$ by (\ref{invariance}), triangle inequality and Corollary \ref{dshcompact} we have
\begin{eqnarray*}
|Cov(X_j,X_{j+\ell})| &\leq& |\int_{\pk}\phi^r_j\circ F_j\cdot\ \phi^r_{j+\ell}\circ F_{j+\ell}\ d\p| + |m_jm_{j+\ell}|\\
&\lesssim& d^{-\ell}\|\phi_j\|_{DSH}^r\|\phi_{j+\ell}\|_{DSH}^r + d^{-2j-\ell}\|\phi_j\|^r\|\phi_{j+\ell}\|^r \\
&\lesssim& d^{-\ell}\|\phi_j\|_{DSH}^r\|\phi_{j+\ell}\|_{DSH}^r (1+ d^{-2j}). \\
\end{eqnarray*}
where the implied constant does not depend on $\ell.$ 
\par Hence, the assertion follows from Gal-Koksma SLLN \cite[Theorem A1]{PS75}.
\end{proof}


\section{ASIP}\label{asip}
First, we recall some basic notions that we will need in the sequel. Let $(U_j)_{j\in\Bbb{N}}$ be a sequence of random variables on a probability space $(X,\mathscr{F},\mu)$ such that $\E[U_j]=0\ \forall j.$ We say that $(U_j)_{j\in\Bbb{N}}$ satisfies almost sure invariance principle (ASIP) with rates if there exists a sequence of independent centered Gaussian random variables $(Z_j)_{j\in\Bbb{N}}$ such that on an extended probability space 
\begin{equation}
\sum_{j=0}^{n-1}U_j=\sum_{j=0}^{n-1}Z_j+o(\sigma_n^{\gamma})\ \text{almost surely}
\end{equation}
where $\gamma\in (0,1)$ a fixed constant and $\sigma^2_n=\sum_{j=0}^{n-1}\E[Z_j^2]\to \infty.$

\par Recall that a Brownian motion at integer times coincides with a sum of i.i.d. Gaussian
variables, hence the above definition can also be formulated as an almost sure
approximation by a Brownian motion, with error $o(\sigma_n^{\gamma})$.

\par A sequence of random variables $(U_j)_{j\in \Bbb{N}}$ is called a \textit{reversed martingale difference} if there exists a non-increasing sequence $(\mathscr{F}_j)_{j\in \Bbb{N}}$ of $\sigma$-algebras (i.e. $\mathscr{F}_{j+1}\subset \mathscr{F}_j$) such that
\begin{itemize}
\item[(1)] $U_j$ is $\mathscr{F}_j$-measurable
\item[(2)] $\E[U_j|\mathscr{F}_{j+1}]=0\ \forall j\geq 0.$
\end{itemize} 
The next lemma will be useful in the proof of Theorem \ref{th1}.
\begin{lem}\cite[Lemma 4.2]{CuMe}\label{lemCuMe}
Let $(\zeta_n)_{n\in\Bbb{N}}$ be a sequence of reversed martingale differences in $L^p_{\mu}$ for some $1\leq p\leq 2$ with respect to a non-increasing filtration $(\mathscr{F}_j)_{j\in\Bbb{N}}.$ Assume that $$\sum_{n\geq0}\E[|\zeta_n|^p]<\infty$$ then $\sum_{n\geq0}\zeta_n$ converges $\mu$-a.s. and in $L_{\mu}^p.$ 
\end{lem}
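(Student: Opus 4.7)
The plan is to reduce the reversed-martingale-difference sum to a classical forward martingale by reversing the time index. Fix $M \ge 1$ and set $Y_j^{(M)} := \sum_{n=M-j}^{M-1}\zeta_n$ for $0 \le j \le M$, equipped with the increasing filtration $\mathscr{G}_j := \mathscr{F}_{M-j}$. The hypothesis $\E[\zeta_n \mid \mathscr{F}_{n+1}] = 0$ translates to $\E[Y_{j+1}^{(M)} - Y_j^{(M)} \mid \mathscr{G}_j] = \E[\zeta_{M-j-1} \mid \mathscr{F}_{M-j}] = 0$, so $(Y_j^{(M)})_{0 \le j \le M}$ is a genuine forward martingale whose increments are $\zeta_{M-1}, \zeta_{M-2}, \dots, \zeta_0$ read in reverse order.

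Next I would invoke the von Bahr--Esseen inequality for $L^p$ martingale differences (valid for $1 \le p \le 2$), obtaining a constant $C_p$ with
\[
\E\Bigl[\Bigl|\sum_{n=N}^{M-1}\zeta_n\Bigr|^p\Bigr] \le C_p \sum_{n=N}^{M-1}\E[|\zeta_n|^p].
\]
The assumption $\sum_n \E[|\zeta_n|^p] < \infty$ then forces the right-hand side to vanish as $N \to \infty$, uniformly in $M \ge N$, so the partial sums $T_N := \sum_{n=0}^{N-1}\zeta_n$ are $L^p_\mu$-Cauchy and converge in $L^p_\mu$ to some limit $T_\infty$.

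The hard part is upgrading $L^p$-convergence to almost sure convergence. For $p > 1$, I would apply Doob's $L^p$ maximal inequality to $(Y_j^{(M)})$, yielding
\[
\E\Bigl[\max_{N \le k \le M} |T_M - T_k|^p\Bigr] \le \Bigl(\tfrac{p}{p-1}\Bigr)^{p} C_p \sum_{n \ge N} \E[|\zeta_n|^p] =: \varepsilon_N,
\]
with $\varepsilon_N \to 0$. Extracting a subsequence $M_j \to \infty$ along which $T_{M_j} \to T_\infty$ almost surely and applying Fatou's lemma gives $\E[\sup_{k \ge N}|T_\infty - T_k|^p] \le \varepsilon_N$. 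Since $\sup_{k \ge N}|T_\infty - T_k|$ is monotone non-increasing in $N$ and converges to $0$ in $L^p_\mu$, it also converges to $0$ almost surely, which is exactly the statement $T_N \to T_\infty$ almost surely.

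The remaining case $p = 1$ is handled separately and more cheaply: by Tonelli, $\E[\sum_n |\zeta_n|] = \sum_n \E[|\zeta_n|] < \infty$, hence $\sum_n|\zeta_n| < \infty$ almost surely and the series converges absolutely almost surely, and in $L^1_\mu$ by dominated convergence. I expect the only real subtlety of the plan to be the time-reversal step that makes the forward-martingale technology available; once the correct finite-$M$ martingale is identified, the von Bahr--Esseen and Doob steps are entirely standard.
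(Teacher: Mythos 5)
Your argument is correct, but note that the paper itself contains no proof of this statement: it is imported verbatim from Cuny--Merlev\`ede \cite[Lemma 4.2]{CuMe} and used as a black box, so what you have written is a self-contained replacement rather than a variant of an in-paper argument. Your route is the standard one and all the steps check out: the time reversal $\mathscr{G}_j:=\mathscr{F}_{M-j}$ really does make $Y^{(M)}_j=\sum_{n=M-j}^{M-1}\zeta_n$ a forward martingale (adaptedness holds because $\mathscr{F}_n\subset\mathscr{F}_{M-j}$ for $n\geq M-j$, and the increment condition is exactly $\E[\zeta_{M-j-1}\mid \mathscr{F}_{M-j}]=0$); the von Bahr--Esseen bound $\E[|\sum_{n=N}^{M-1}\zeta_n|^p]\leq 2\sum_{n=N}^{M-1}\E[|\zeta_n|^p]$ is indeed valid for martingale differences when $1\leq p\leq 2$ (Burkholder's inequality together with subadditivity of $t\mapsto t^{p/2}$ gives the same conclusion), which yields the $L^p_\mu$-Cauchy property; and for $p>1$ the combination of Doob's $L^p$ maximal inequality, passage to an a.s.\ convergent subsequence $T_{M_j}\to T_\infty$, Fatou, and the monotonicity of $G_N:=\sup_{k\geq N}|T_\infty-T_k|$ correctly upgrades this to almost sure convergence --- the only place where you should add a line is the pointwise inequality $G_N\leq \liminf_j\max_{N\leq k\leq M_j}|T_{M_j}-T_k|$, obtained by first taking the maximum over $N\leq k\leq K$ for fixed $K$ and then letting $K\to\infty$, which is what legitimizes the Fatou step. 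The case $p=1$ via Tonelli and dominated convergence is fine and, as you observe, uses no martingale structure at all. In short, the proposal is a complete and correct proof of the quoted lemma; it buys self-containedness at the cost of re-proving a known result, whereas the paper simply delegates it to \cite{CuMe}.
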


We will use the following abstract ASIP for sequences of reversed martingale differences:

\begin{thm}\cite{CuMe}\label{CuMe}
Let $(U_j)_{j\in\Bbb{N}}$ be a sequence of square integrable random variables adapted to a non-increasing filtration $(\mathscr{F}_j)_{j\in\Bbb{N}}.$ Assume that 
$\E[U_j|\mathscr{F}_{j+1}]=0$ $\mu$-a.s and $\nu_n^2:=\sum_{j=0}^{n-1}\E[U_j^2]\to \infty$ and that $\sup_j\E[U_j^2]<\infty.$ Let also $a_n$ be a non-decreasing sequence of positive real numbers such that 
$$(a_n/\nu_n^2)_{n\in \Bbb{N}}\ \ \text{is non-increasing and}\ (a_n/\nu_n)_{n\in \Bbb{N}}\ \ \text{is non-decreasing}.$$ 
Assume further that 
\begin{equation}\label{m1}
\sum_{j=0}^{n-1}(\E[U_j^2|\mathscr{F}_{j+1}]-\E[U_j^2])=o(a_n)\ \mu-a.s.
\end{equation}

\begin{equation}\label{m2}
\sum_{j=0}^{\infty}a_n^{-q}\E[|U_j|^{2q}]<\infty\ \text{for some}\ 1\leq q\leq 2.
\end{equation}
Then on an extended probability space there exists a sequence $(Z_j)_{j\in\Bbb{N}}$ of independent centered Gaussian variables such that $\E[Z_j^2]=\E[U_j^2]$ and 
\begin{equation}
\sup_{0\leq k\leq n-1}|\sum_{j=0}^{k}U_j-\sum_{j=0}^{k}Z_j|=o(\big[a_n\big(|\log (\frac{\nu_n^2}{a_n})|+\log\log a_n\big)\big]^{\frac12})\ \ \text{almost surely}
\end{equation}
 
\end{thm}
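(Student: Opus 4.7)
The plan is to combine a Skorokhod-type embedding adapted to reverse martingales with L\'{e}vy's modulus of continuity for Brownian motion, using the monotonicity conditions on $a_n$ to extract the precise logarithmic rate. The key structural observation is that the non-increasing filtration $(\mathscr{F}_j)$ together with $\E[U_j|\mathscr{F}_{j+1}]=0$ makes each $U_j$ centered given the ``tail'' $\sigma$-algebra $\mathscr{F}_{j+1}$; this orientation is exactly what a conditional Skorokhod embedding requires, provided one proceeds inductively in the reverse-time direction.

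First I would enlarge the probability space so that it carries a standard Brownian motion $(B(t))_{t\geq 0}$ independent of $\mathscr{F}_0$, together with an auxiliary reservoir of randomness. Working inductively in $j$, I would construct an increasing family of stopping times $0=T_0\leq T_1\leq\cdots$, adapted to the Brownian filtration enlarged by the appropriate $\mathscr{F}_{j+1}$'s, so that conditionally on $\mathscr{F}_{j+1}$ the increment $B(T_{j+1})-B(T_j)$ has the same law as $U_j$. The classical conditional Skorokhod embedding then gives $\E[T_{j+1}-T_j|\mathscr{F}_{j+1}]=\E[U_j^2|\mathscr{F}_{j+1}]$ together with a Burkholder--Davis--Gundy-type bound $\E[(T_{j+1}-T_j)^q|\mathscr{F}_{j+1}]\leq c_q\,\E[U_j^{2q}|\mathscr{F}_{j+1}]$ for the $q$ appearing in hypothesis (\ref{m2}). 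Defining $Z_j:=B(\nu_{j+1}^2)-B(\nu_j^2)$ on the extended space produces an independent centered Gaussian sequence with $\E[Z_j^2]=\E[U_j^2]$, as required.

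By construction $\sum_{j=0}^{k-1}U_j=B(T_k)$ almost surely, so the whole error reduces to controlling $|B(T_k)-B(\nu_k^2)|$ uniformly in $k\leq n-1$. I would split the time discrepancy as
$$T_k-\nu_k^2=\Big(T_k-\sum_{j=0}^{k-1}\E[U_j^2|\mathscr{F}_{j+1}]\Big)+\Big(\sum_{j=0}^{k-1}\E[U_j^2|\mathscr{F}_{j+1}]-\nu_k^2\Big).$$
The second bracket is $o(a_n)$ almost surely, directly by hypothesis (\ref{m1}). The first bracket is itself a reverse martingale in $k$ whose increments have $q$th moments bounded by $c_q\E[U_j^{2q}]$; hypothesis (\ref{m2}) combined with a Burkholder--Davis--Gundy inequality and a Borel--Cantelli argument then shows that this term is also $o(a_n)$ almost surely. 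Thus $\sup_{k\leq n-1}|T_k-\nu_k^2|=o(a_n)$ a.s.

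The hard part will be extracting the precise factor $|\log(\nu_n^2/a_n)|+\log\log a_n$. This comes from L\'{e}vy's modulus of continuity applied to $B$ on a time window of length $O(a_n)$ around $\nu_n^2$, whose maximal oscillation is almost surely bounded by $\sqrt{a_n(|\log(\nu_n^2/a_n)|+\log\log a_n)}$ up to a multiplicative constant. The monotonicity assumptions on $(a_n/\nu_n^2)$ and $(a_n/\nu_n)$ are exactly what permits the modulus-of-continuity estimate at the terminal scale $n$ to dominate the fluctuations uniformly over all $k\leq n-1$, without losing extra logarithmic factors. The principal technical obstacle is arranging the conditional Skorokhod embedding consistently in the reverse-time direction: since the standard constructions are set up for forward-adapted martingales, one typically has to truncate to each finite horizon $N$, perform the embedding there, and then pass to the limit via a coupling/Kantorovich--Rubinstein argument on the extended space, so that the stopping times $T_k$ and the Gaussians $Z_j$ are jointly defined on a single enlarged probability space.
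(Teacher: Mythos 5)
First, a point of comparison: the paper does not prove this statement at all --- Theorem \ref{CuMe} is imported verbatim from \cite{CuMe} and used as a black box, so there is no internal proof to measure your attempt against. Judged on its own terms, your sketch has a genuine gap at its core, and it is exactly the one you flag in your final sentence but do not resolve. The Skorokhod embedding produces stopping times adapted to a \emph{forward} filtration in which each increment is centered given the past; here $U_j$ is centered given the future $\mathscr{F}_{j+1}$, and the only forward martingales available are the time-reversed finite blocks $(U_{N-1},U_{N-2},\dots,U_0)$ for each fixed horizon $N$. The embeddings obtained for different horizons represent $S_k=\sum_{j=0}^{k-1}U_j$ as differences $B(T^{(N)}_0)-B(T^{(N)}_{k})$ of horizon-dependent stopping times and are mutually inconsistent; there is no evident compactness or coupling argument that upgrades them to a single Brownian motion with a single increasing sequence $(T_k)$ satisfying $B(T_k)=S_k$ for all $k$ simultaneously, which is the identity your entire error analysis rests on. A ``Kantorovich--Rubinstein limit'' would at best give distributional convergence of finite-dimensional marginals, not the almost sure pathwise identities you need. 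A secondary problem of the same nature: your control of $T_k-\sum_{j<k}\E[U_j^2|\mathscr{F}_{j+1}]$ treats the increments $T_{j+1}-T_j-\E[T_{j+1}-T_j|\mathscr{F}_{j+1}]$ as a reversed martingale difference sequence, but the $T_j$ would be measurable with respect to the forward Brownian filtration while the $\mathscr{F}_j$ decrease, so exhibiting a non-increasing filtration to which these increments are adapted and conditionally centered is again obstructed by the time-orientation clash.

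For what it is worth, the published proof in \cite{CuMe} avoids Skorokhod embedding altogether: it runs a blocking argument, shows that suitably normalized block sums are close to conditionally Gaussian via estimates on conditional characteristic functions, and invokes a Berkes--Philipp/Strassen--Dudley type approximation theorem, together with the maximal inequality for reversed martingales (Lemma \ref{lemCuMe} above) and Kronecker-type arguments, to assemble the independent Gaussian sequence; the monotonicity hypotheses on $a_n/\nu_n^2$ and $a_n/\nu_n$ enter in the choice of block lengths. Your modulus-of-continuity endgame is correct and standard (Cs\"org\H{o}--R\'ev\'esz increment estimates do yield the factor $|\log(\nu_n^2/a_n)|+\log\log a_n$ over windows of length $o(a_n)$), but that is not where the difficulty lies; the construction of the coupling is, and as written it does not go through.
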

\subsection*{Proof of Theorem \ref{th1}}
We prove the Theorem for DSH observables as H\"{o}lder continuous case is the similar. Let $(\B_j)_{j\in\Bbb{N}} $ denote the filtration defined by (\ref{filtration}). Note that 
$$\mathcal{P}_n(\psi_n\circ F_n)=\psi_n.$$
Moreover, since $\E[\phi|\B_n]$ is orthogonal projection of $\phi\in L^2(\B,\p)$ to the Hilbert space $L^2(\B_n,\p)$ it is easy to see that (cf. \cite[\S 5]{Bay})
\begin{equation}\label{3:1}
\E[\psi_n\circ F_n|\B_n]=(\mathcal{P}_n\psi_n)\circ F_n \ \ \p-a.s.
\end{equation}
This implies that 
\begin{equation}\label{3:2}
\|\E[\psi_n\circ F_n|\B_n]\|_{L^2_{\p}}=\|\mathcal{P}_n\psi_n\|_{L^2_{\mu_n}} \leq C_{\f}\ d^{-n}\|\psi_n\|^{\p}_{DSH}
\end{equation}
where the latter inequality follows from Lemma \ref{compact}.

\par Now, we define 
$$h_j=P_{j-1}(\psi_{j-1}+h_{j-1})\ \forall j\geq 1$$ and set $h_0\equiv 0.$ We also let
$$U_j:=(\psi_j+h_j-h_{j+1}\circ f_j)\circ F_j\ \forall j\geq 0.$$
Note that since $P_j(\psi_j+h_j)-h_{j+1}=0,$ by (\ref{3:1}) we obtain
$$\E[U_j|\B_{j+1}]=0$$
Hence, $(U_j)_{j\in\Bbb{N}}$ form a sequence of reversed martingale differences for the filtration $(\B_j)_{j\in\Bbb{N}} $ defined by (\ref{filtration}). Moreover,
\begin{equation}\label{3:3}
\sum_{j=0}^{n-1}U_j=S_n-h_{n}\circ F_{n}
\end{equation}
where $S_n:=\sum_{j=0}^{n-1}\psi_j\circ F_j.$
Furthermore, by invariance properties (\ref{invariance}) and Lemma \ref{compact} we have
\begin{equation}\label{3:3.25}
\|h_n\circ F_n\|_{L^2_{\p}}=\|h_n\|_{L^2_{\mu_n}}\leq C_1
\end{equation} where $C_1>0$ depends only on $\f$ but independent of $n.$ This implies that
\begin{eqnarray}\label{3:3.5}
|\|S_n\|_{L^2_{\p}}-\|\sum_{j=0}^{n-1} U_j\|_{L^2_{\p}}| &\leq & \|S_n-\sum_{j=0}^{n-1} U_j\|_{L^2_{\p}}\\
&=& \|h_n\circ F_n\|_{L^2_{\p}}\leq C_1 \nonumber
\end{eqnarray}
and we conclude that $\|\sum_{j=0}^{n-1} U_j\|^2_{L^2_{\p}}\to \infty.$ Then using $\int_{\pk}U_iU_jd\p=0$ for $i\not=j$ we deduce that $$\nu_n^2=\sum_{j=0}^{n-1}\E[U^2_j]=\|\sum_{j=0}^{n-1} U_j\|^2_{L^2_{\p}}\to \infty$$
where $\E[U^2_j]=\int_{\pk}U_j^2d\p.$ Moreover, by (\ref{3:3.5})
\begin{equation}\label{3:3.75}
|\sigma_n-\nu_n|\leq C_1.
\end{equation}
We also remark that by Lemma \ref{compact}
\begin{equation}\label{3:4}
\sup_{j}\|U_j\|_{L^q_{\p}}\leq C_q,\ \ \forall q\geq1.
\end{equation}
First, we establish ASIP for $(U_j)_{j\geq0}$ by verifying the hypotheses (\ref{m1}) and (\ref{m2}) of Theorem \ref{CuMe}. To this end let $\frac{1+\epsilon}{1+4\epsilon}<\gamma<1$ and we choose $a_n=\nu_n^{2\gamma}\gtrsim n^{\frac12+\epsilon}$ which is clearly non-decreasing sequence of  positive real numbers verifying $a_n/\nu_n^2= \nu_n^{2\gamma-2}$ is decreasing and $a_n/\nu_n=\nu_n^{2\gamma-1} \uparrow \infty$ since $\gamma>\frac12$. 
Note that by (\ref{3:4}), (\ref{3:3.75}) and the assumption (\ref{variance}) on $\sigma_n$ we obtain
\begin{equation}\label{normU}
\sum_{j\geq0}\frac{\|U_j\|^4_{L^4_{\p}}}{a_j^2}\leq \sum_{j\geq1}\frac{C_3}{j^{1+4\epsilon}}<\infty
\end{equation} which verifies (\ref{m2}) with $q=2.$ Moreover, by (\ref{normU}) and Lemma \ref{lemCuMe}  
$$\sum_{j=0}^{\infty}\frac{\E[U_j^2|\B_{j+1}]-U_j^2}{a_j}$$
converges $\p$-a.s. Then by Kronecker's Lemma \cite[Theorem 2.5.5]{Durrett}
$$\sum_{j=0}^{n-1}\big(\E[U_j^2|\B_{j+1}]-U^2_j\big)=o(a_n)\ \p\text{-a.s.}$$
Hence, in order to get (\ref{m2}), it is enough to show that 
\begin{equation}\label{claim}
\sum_{j=0}^{n-1}\big(U_j^2-\E[U_j^2]\big)=o(a_n)\ \p\text{-a.s.}
\end{equation}
Let us denote by $$\phi_j:=(\psi_j+h_j-h_{j+1}\circ f_j)$$ so that $U^2_j=\phi^2_j\circ F_j$ for $j\geq0.$ Note that $\phi_j$ are dsh functions and by Lemma \ref{compact} their dsh norms are bounded. Moreover, since $\la \mu_j,h_j\ra=0$ by (\ref{invariance}) we have $\la\mu_j,\phi_j\ra=0$ for $j\geq 0.$ Hence, the claim (\ref{claim}) follows from Theorem \ref{SLLN}.

Now, applying Theorem \ref{CuMe} for $(U_j)_{j\geq0}$ on an extended probability space we obtain centered independent Gaussian random variables $(Z_j)_{j\geq0}$ such that $\E[Z_j^2]=\E[U_j^2]$ and 

\begin{eqnarray*}
\sup_{0\leq k\leq n-1}|\sum_{j=0}^{k}U_j-\sum_{j=0}^{k}Z_j|&=&o(\sqrt{a_n(|\log(\frac{\nu_n^2}{a_n})|+\log\log a_n)})\\
&=& o(\sigma^{\gamma}_n\sqrt{\log \sigma_n})\ \ \text{almost surely}
\end{eqnarray*}
where the last equality follows from (\ref{3:3.75}).

Finally, we remark that $h_j$ are dsh functions and by Lemma \ref{compact} their dsh norms are bounded. Moreover, by (\ref{invariance}) the means $\la \mu_j,h_j\ra=0$ and by Theorem \ref{SLLN} 
$$\sum_{j=0}^{n-1}h_j\circ F_j=o(\sigma^{\gamma}_n\sqrt{\log \sigma_n})\ \ \text{almost surely}$$
Since the $\log$-term can be absorbed in $\sigma_n^{\gamma}$ when $\gamma$ varies in an open interval, (\ref{as}) follows from (\ref{3:3}), (\ref{3:3.25}). This finishes the proof for DSH observables. 

For the H\"{o}lder continuous observables, Theorem \ref{SLLN} does not apply directly to the functions $\phi_j.$ In this case, using a standard convolution and a partition of unity, we can approximate $\psi_j$ by functions $\psi_{j,\delta}$ satisfying 
$$\|\psi_{j,\delta}\|_{\mathscr{C}^2}\lesssim \delta^{-2}\ \text{and}\ \ \sup_{\pk}|\psi_{j}(x)-\psi_{j,\delta}(x)|\lesssim \delta^{r}$$ for some $r>0.$ Then we estimate the covariances as in Theorem \ref{SLLN} and obtain SLLN. The details are left to the reader. 

\qed


 \bibliographystyle{amsalpha}

\bibliography{biblio}
\end{document}